\def\al{{\alpha}}
\def\be{{\beta}}
\def\ga{{\gamma}}
\def\th{{\theta}}
\def\cu{{\kappa}}
\def\om{{\omega}}
\def\ellog{{\mathfrak{l}}}
\def\C{{\mathbb C}}  
\def\R{{\mathbb R}}                                 
\def\Q{{\mathbb Q}}     
\def\Z{{\mathbb Z}}   
\def\N{{\mathbb N}}
\def\bx{{\mathrm{x}}} \def\by{{\mathrm{y}}} \def\bg{{\mathrm{g}}}
\def\A{{\mathrm A}}  
\def\B{{\mathrm B}}  
\def\C'{{\mathrm C}}  
\def\g{{\mathrm g}}  
\def\deg{{\mathrm{deg}}}
 \def\Res{{\mathrm{Res}}}
\newtheorem{proposition}{Proposition}[subsection]
\newtheorem{theorem}[proposition]{Theorem}
\newtheorem{corollary}[proposition]{Corollary}
\newtheorem{lemma}{Lemma}[section]
\newtheorem*{lemma*}{Lemma}
\begin{document}

\title{{Diophantine equations coming from binomial near-collisions.}}
\author{\large{Nikos Katsipis} 
   \\\normalsize{katsipis@gmail.com}\\ \normalsize{High School of Thira, Santorini, Greece}
\thanks{This work is part of the author's Doctoral Thesis at the
Department of Mathematics \& Applied Mathematics, University of Crete.}}
\date{} 

\maketitle

\section{{\large Introduction}}
               \label{sec intro}
Given a positive integer $d$ and a pair $(k,l)$ of unequal integers 
$\geq 2$, we say that there exists a 
\emph{(binomial) $(k,l)$ near-collision with difference $d$}
if there exists a pair $(m,n)$ of integers with    
$2\leq k\leq n/2$, $2\leq l\leq m/2$, such that
$\binom{m}{l}-\binom{n}{k}=d$ and $\binom{m}{l}\geq d^3$.
In such a case, the quadruple $(n,k,m,l)$ is said to be a
\emph{(binomial) near collision with difference $d$}. 
\\
Note that the above restrictions on $k,l$ are very natural in view of
the symmetries $\binom{m}{l}=\binom{m}{m-l}$ and
$\binom{n}{k}=\binom{n}{n-k}$. 
The rather arbitrary condition $\binom{m}{l}\geq d^3$ is just to ensure
that the difference between the two binomial coefficients is quite small 
compared to the greater one. As explained in \cite{dW et al}, it is
probably more natural to replace the exponent 3 of $d$ from the exponent 5. 

If we consider $k,l\geq 2$ and $d\neq 0$ (not-necessarily positive) 
as given fixed integers with $k\neq l$ we obtain the Diophantine equation
\begin{equation}
             \label{eq general collision}
\binom{m}{l}-\binom{n}{k} = d,             
\end{equation}
in the positive integer unknowns $m,n$,
without any restriction on the size of $\binom{m}{l}$ compared to $d$. 
In Section \ref{sec (k,l)=(3,6)} we will solve \eqref{eq general collision}
when $(k,l)=(3,6)$ for various values of $d$, and in 
Section \ref{sec (k,l)=(8,2)} we will solve \eqref{eq general collision} 
with $(k,l)=(8,2)$ and $d=1$.
Our main results, Theorems \ref{thm d=-1,main}, \ref{thm various d,main}, 
\ref{thm sols of quartic eq} respectively imply 
Corollaries \ref{corol (k,l)=(3,6)},\ref{corol (k,l)=(6,3)},
\ref{corol (k,l)=(2,8)}. As a consequence we have that 
\emph{
$(k,l)$-near collisions with difference 1 do not exist if 
$(k,l)\in\{(6,3),(3,6),(8,2)\}$}, establishing thus a conjecture stated in 
\cite[Section 3.1]{dW et al}.

We now sketch the method which we apply in Sections \ref{sec (k,l)=(3,6)} 
and \ref{sec (k,l)=(8,2)} for solving the equations mentioned above.
For each equation we work as follows. We reduce its resolution to the
problem of finding the points $(u,v)$ with integral coordinates on a 
certain elliptic curve $C$ whose equation is not in Weierstrass form. 
We find a Weierstrass model $E$ and an explicit birational transformation
\label{page birational transformation}
\begin{align*}
C\ni (u,v)&\longrightarrow(x,y)=\left(\mathcal{X}(u,v),
\mathcal{Y}(u,v)\right)\in E \\
C\ni \left(\mathcal{U}(x,y),\mathcal{V}(x,y)\right)=(u,v)
 &\longleftarrow(x,y)\in E
\end{align*}
between $C$ and $E$. This is accomplished by the {\sc maple} 
implementation of van Hoeij's algorithm  \cite{MVH et al}.
The typical point on $C$ is denoted by $P^C$ and the corresponding point
on $E$ via the above birational transformation by $P^E$. 
We will also use the notation $(u(P),v(P))$ for the coordinates 
of the point $P$ viewed as a point on $C$, hence $(u(P),v(P))=P^C$, 
and $(x(P),y(P))$ for the coordinates of the 
point $P$ viewed as a point on $E$, hence $(x(P),y(P))=P^E$.
Thus, if 
$P^C=(u,v)=(u(P),v(P))$ and $P^E=(x,y)=(x(P),y(P))$, then 
$x=\mathcal{X}(u,v)$,
$y=\mathcal{Y}(u,v)$ and $u=\mathcal{U}(x,y)$, $v=\mathcal{V}(x,y)$.

\noindent
Our problem is reduced to the following:
\begin{quote}
\vspace{-5pt}
\emph{To compute explicitly all points $P^E\in E(\Q)$ such that 
$P^C\in C(\Z)$.}
\vspace{-5pt}
\end{quote}
We deal with this problem as follows. Using 
the routine {\tt MordellWeilBasis} of 
{\sc magma}\cite{magma} 
based on the work of many contributors, like J.~Cremona, S.~Donelly, 
T.~Fisher, M.~Stoll, to mention a few of them,
we compute a 
Mordell-Weil basis for $E(\Q)$ and let $P_1^E,\ldots,P_r^E$ be generators 
of the free part of $E(\Q)$.
At this point we stress the fact that, in certain cases,
especially when the rank of the elliptic curve is $\geq 5$, it is necessary
to improve the Mordell-Weil basis computed by {\sc magma},
in the sense explained in ``Important computational issue'' of 
Appendix \ref{LLL_d_-1}; we will need to do this in 
Sections \ref{subsec d=-1} and \ref{sec (k,l)=(8,2)}.
Let $P^C=(u,v)$ denote the typical unknown point with integral 
coordinates. 
Its transformed point $P^E$ via the previously mentioned birational 
transformation is a point with rational coordinates, therefore 
$P^E=m_1P_1^E+\cdots+m_rP_r^E+T^E$, where $m_1,\ldots,m_r$ are 
unknown integers and $T^E$ denotes the typical torsion point
(only finitely many and, actually, very few options for $T^E$ exist). 
To this we associate the linear form 
\begin{equation}
                \label{eq L(P) general}
L(P)=(m_0+\frac{s}{t})\,\om_1+m_1\ellog(P_1)+\cdots+m_r\ellog(P_r)
           \;\{\pm\ellog(P_0)\}.           
\end{equation}
Some explanations have their place here. Firstly, $\ellog$ 
\label{page def frak l}
denotes the map $\mathfrak{l}: E(\R)\rightarrow\R/\Z\omega_1$ closely 
related to the elliptic-logarithm function, which is defined and 
discussed in detail in Chapter 3 of \cite{Nikosbook}, especially, 
Theorem 3.5.2.
Next, $\om_1$ is the minimal positive real period of $E$, $m_0$ is an 
extra integer whose size depends explicitly on 
$M:=\max_{1\leq i\leq r}|m_i|$, and $s,t$ are relatively prime integers 
as follows: $t\geq 1$ divides the $\mathrm{lcm}$ of the orders of the 
non-zero torsion points of $E$ and $s$ is such that $-1/2<s/t\leq 1/2$.
\footnote{Note that, by a famous theorem of B.~Mazur, $11\neq t\leq 12$;
see \cite{Mazur1}, \cite{Mazur2}, or \cite[Theorem 7.5]{Silverbook}.}
Last, the indication $\{ \}$ in the summand 
$\ellog(P_0)$ means that this 
is present only in Section \ref{sec (k,l)=(3,6)}, where $P_0$ is a certain
explicitly known point.
\\
\noindent  
The \emph{Elliptic Logarithm Method} exploits the fact that $u,v$ are
integers in order to find an upper bound for $|L(P)|$ in terms of $M$
(see \eqref{eq ub |L(P)|}) and, on the other hand, applies a deep result 
of S.~David \cite{David} in order to obtain a lower bound for $|L(P)|$ in 
terms of $M$. Comparing the two bounds of $|L(P)|$ leads to a relation
\begin{equation}
                \label{eq furnishes ub for M}
\rho M^2\leq \frac{c_{11}c_{13}}{2\th}
(\log(\alpha M+\beta)+c_{14})
(\log\log(\alpha M+\beta)+c_{15})^{r+3}+\gamma
+\frac{c_{11}}{2\th}\log\frac{c_9}{1+\th}+
\textstyle{\frac{1}{2}}c_{10},
\end{equation}
where all constants involved in it are explicit; 
see relation (9.8),
Theorem 9.1.3 of \cite{Nikosbook}.
It is clear that, if $M$ is larger than an explicit bound $B$, then 
the left-hand side is \emph{larger} than the right-hand side and this 
contradiction certainly implies that $M\leq B$. 
Since $B$ is explicit, this allows us to compute all
integer points $P^C=(u,v)$ as follows: For each $(m_1,\ldots,m_r)$ in 
the range $|m_i|\leq M$ ($i=1,\ldots,r$) we compute each point 
$P^E=m_1P_1^E+\cdots+m_rP_r^E+T^E$ with $T^E$ a torsion point and then
we compute its transformed point $P^C$ via the previously mentioned 
birational transformation. If $P^C$ has integer coordinates, then we
have gotten an integer point $P^C=(u,v)$. 

In principle, this procedure allows to pick-up all integer points $(u,v)$
and, indeed, this is so if the bound $B$ is small, say around 30. But 
the bound which we obtain from \eqref{eq furnishes ub for M} is huge
and we must reduce it to a manageable size, which is accomplished with 
de Weger's \cite{dW} technique, the basic tool of which is the 
\emph{LLL-algorithn} of Lenstra-Lenstra-Lov\'{a}sz \cite{LLL}.

\section{{\large Equation \eqref{eq general collision} with $(k,l)=(3,6)$}}
               \label{sec (k,l)=(3,6)}
Replacing in \eqref{eq general collision} $d$ by $-d$ we obtain the 
equation 
\begin{equation}
                  \label{eq d-collision}
 \binom{n}{3}=\binom{m}{6}+d\,,
\end{equation}
which we study in this section.
We have
\[
\binom{n}{3}=\dfrac{n(n-1)(n-2)}{6}=
\dfrac{1}{6}\left((n-1)^3-(n-1)\right)=\dfrac{1}{6}
\left(u^3-u\right),
\]
where $u=n-1$, and
\begin{align*}
\binom{m}{6}&=\dfrac{m(m-1)(m-2)(m-3)(m-4)(m-5)}{6!}\\
&=\dfrac{\left(\left(m-\dfrac{5}{2}\right)^2-\dfrac{25}{4}\right)
\left(\left(m-\dfrac{5}{2}\right)^2-\dfrac{9}{4}\right)\left(\left(m-
\dfrac{5}{2}\right)^2-\dfrac{1}{4}\right)}{6!}\\
&=\dfrac{\left(\dfrac{1}{2}\left(\left(m-\dfrac{5}{2}\right)^2
-\dfrac{1}{4}\right)-3\right)\left(\dfrac{1}{2}
\left(\left(m-\dfrac{5}{2}\right)^2-\dfrac{1}{4}\right)-1\right)
\dfrac{1}{2}
\left(\left(m-\dfrac{5}{2}\right)^2-\dfrac{1}{4}\right)}{6\cdot5\cdot3}
\\ &=\dfrac{(v-3)(v-1)v}{6\cdot5\cdot3},
\end{align*}
where 
$v=\dfrac{1}{2}\left(\left(m-\dfrac{5}{2}\right)^2-\dfrac{1}{4}\right)=
 (m-2)(m-3)/2$.
\\
Thus, equation \eqref{eq d-collision} implies 
\begin{equation}
                  \label{eq reduce initial}
 15(u^3-u-6d)=v^3-4v^2+3v,                 
\end{equation}
with $u,v$ related to $n$ and $m$ as above. 
We rewrite equation \eqref{eq reduce initial} as $g(u,v)=0$, where
\begin{equation}
                  \label{guv}
g(u,v)=15u^3-v^3+4v^2-90d-15u-3v.
\end{equation}
\subsection{Equation \eqref{guv} when $d=(N^3-N)/6$}
          \label{subsec d=(N^3-N)/6}
Assuming that $N$ is an explicitly known non-zero integer, we will show 
how the method of \cite[Chapter 8]{Nikosbook} can be applied in order to
compute --at least in principle-- all integer solutions of \eqref{guv}. 
A crucial fact is that certain parameters involved in the application of
that method can be expressed uniformly in $N$.  

The curve $C: g(u,v)=0$, being a non-singular cubic, has genus one.
Moreover, $(u,v)=(n,1)$ is a rational point of $C$,  
so that $C$ is a model of an elliptic curve over $\Q$. The {\sc maple} 
implementation of van Hoeij's algorithm  \cite{MVH et al} gives the 
birational transformation between $C$ and the Weierstrass model 
\begin{equation}
               \label{eq E}
  E: y^2=x^3-1575x+33750N^3-33750N-\frac{1366875}{4}N^6
+\frac{1366875}{2}N^4-\frac{1366875}{4}N^2+52650.
\end{equation}
The birational transformation from $C$ to $E$ 
mentioned in page \pageref{page birational transformation} is
\begin{align*}
(u,v)&\longrightarrow(x,y)=\left(\mathcal{X}(u,v),
\mathcal{Y}(u,v)\right)\\
\left(\mathcal{U}(x,y),\mathcal{V}(x,y)\right)=(u,v) &\longleftarrow(x,y),
\end{align*}
where the functions $\mathcal{X}$ and $\mathcal{Y}$ are 
\begin{align*}  
\mathcal{X}(u,v)= &\,(-45N^3v+45N^2uv+120N^3
-60N^2u-60Nu^2+15Nv-15uv +3v^2+60u-12v 
\\  &-60N+9) : (N-u)^2,
\\ \nonumber\mathcal{Y}(u,v)=&-\dfrac{3}{2}
(675N^6-675N^5u-675N^4u^2+675N^3u^3+120N^3v^2-120N^2uv^2-675N^4
\\ 
&+1125N^3u-480N^3v-225N^2u^2+390N^2uv-225Nu^3+90Nu^2v+420N^3
\\ \nonumber 
 &-180N^2u -180Nu^2-40Nv^2-60u^3+40uv^2+150N^2-300Nu+190Nv+150u^2
\\ & -190uv+6v^2-240N+240u-24v+18) : (N-u)^3,
\end{align*}
and the functions $\mathcal{U},\mathcal{V}$ are given by
\begin{align*}
\mathcal{U}(x,y)=\nonumber&\,(-2Nx^3-120x^2+6xy-(-15525N^3+12375N-540)x
\\&+(4050N^4-2700N^2+360N+450)y-1366875N^7+1366875N^5+135000N^4  
\\& -455625N^3-67500N^2+58725N-40500)
\\
 & \hspace{5pt} : (-2x^3-360Nx^2-(9450N^2+4050)x +2733750N^6-2733750N^4
\\& \hspace{15pt} +297000N^3+911250N^2-243000N-89100),   
\end{align*}
\begin{align*}
\mathcal{V}(x,y)=& \,
(5467500N^6-7290000N^4+759375N^3+3037500N^2-577125N-380700-
\\ & (91125N^5-60750N^3+8100N^2+10125N+8100)x-270Nx^2
\\&+(5400N^3-1800N+270)y-(-90N^2+30)xy) 
\\ & \hspace{5pt} :
(-2x^3-360Nx^2-(9450N^2+4050)x 
 +2733750N^6-2733750N^4 
\\ & \hspace{15pt} +297000N^3+911250N^2-243000N-89100).
\end{align*}
Let $\zeta$ denote the cubic root of $15$; in our computations we view
$\zeta$ as a real number.
With the aid of {\sc maple} we find out that there is exactly one 
conjugacy class of Puiseux series $v(u)$ solving $\mathrm{g}(u,v)=0$. 
This unique class contains exactly three series and only the following 
one has real coefficients:
\begin{align}\label{Puiseux_general}
v_1(u)=&\nonumber \,\zeta u+4/3+\left(\dfrac{7}{135}\zeta^2
-\dfrac{1}{3}\zeta\right)u^{-1}+\left(-\dfrac{1}{3}\zeta n^3
+\dfrac{4}{243}\zeta+\dfrac{1}{3}\zeta n\right)u^{-2}
+\left(\dfrac{7}{405}\zeta^2-\dfrac{1}{9}\zeta\right)u^{-3}
\\ &+\left(\dfrac{7}{405}\zeta^2 n^3-\dfrac{7}{405}\zeta^2 n
-\dfrac{2}{9}\zeta n^3+\dfrac{2}{9}\zeta n
+\dfrac{8}{729}\zeta-\dfrac{28}{32805}\zeta^2\right)u^{-4}+\ldots
\end{align}
In the notation of Fact 8.2.1(a) in \cite{Nikosbook}, 
$K=\mathbb{Q}(\zeta)$, $\mu_1=-1$, $\nu_1=1$ and according to  
Fact 8.2.1(d) of \cite{Nikosbook}, a constant $B_0$
can be explicitly computed with the property that, for $|u|>B_0$ the  
identity $g(u,v_1(u))=0$ holds. In our case it turns out from 
Appendix \ref{appendix B0} that we can take $B_0=|N|+1$.
Then, according to Lemma 8.3.1 in \cite{Nikosbook}, for every integer 
solution $(u,v)$ of  \eqref{guv} with $|u|\geq |N|+1$ we have $v=v_1(u)$. 
Thus in the notation of Proposition 8.3.2 in \cite{Nikosbook}, 
$\mathrm{x}(u)=\mathcal{X}(u,v_1(u))$ and, putting $u=t^{-\nu_1}=t^{-1}$ 
we write $\mathrm{x}(u)$ as a series in $t$
\begin{eqnarray}\label{xt_gen}
\lefteqn{
\mathrm{x}(t)=\nonumber45\zeta N^2-60N-15\zeta+3\zeta^2+\left(45N^3\zeta+6N
\zeta^2-120N^2-15N\zeta-4\zeta+40\right)t 
}
\\
& &\nonumber+\left(45\zeta N^4+\dfrac{34}{3}\zeta^2N^2-120N^3
-30\zeta N^2-8\zeta N-\dfrac{25}{9}\zeta^2+40N+5\zeta+3\right)t^2
\\&  &\nonumber +\left(30\zeta N^5+\dfrac{37}{3}\zeta^2 N^3-120N^4-10
\zeta N^3-\dfrac{304}{27}\zeta N^2-\dfrac{25}{9}\zeta^2N
+40N^2-\frac{44}{405}\zeta^2+6N+\frac{88}{81}\zeta \right)t^3
\\ & & +O\left(t^4\right)
\end{eqnarray}
Then the point $P_0^E$ that plays a crucial role in the resolution 
(see \cite[ Definition 8.3.3]{Nikosbook}) is 
\begin{equation}
              \label{eq P0}
P_0^E=(45N^2\zeta+3\zeta^2-60N-15\zeta,\; 90-60\zeta^2-135\zeta 
N+\dfrac{675}{2}N-\dfrac{2025}{2}N^3+180\zeta^2N^2).
\end{equation}

Now we refer to the discussion of Section \ref{sec intro} whose notation
etc we use. According to \cite[Theorem 9.1.3]{Nikosbook}, applied
to ``case of Theorem 8.7.2'', if $|u(P)|\geq\max\{B_2,B_3\}$, where 
$B_2$ and $B_3$ are explicit positive constants, then either 
$M\leq c_{12}$, where $c_{12}$ is an explicit constant, or the 
inequality \eqref{eq furnishes ub for M} holds. As already mentioned in 
Section \ref{sec intro}, all constants in \eqref{eq furnishes ub for M}
explicit. More specifically, as we show 
in Appendix \ref{appendix theta,c9,c10,c11},  
\[
B_2=3|N|,\quad B_3=|N|+1,\quad               
\th=1,\quad c_9=0.17,\quad c_{10}=\log(200|N|^3),\quad c_{11}=2,
\]
while the remaining constants appearing in 
\eqref{eq furnishes ub for M}, namely,
$\al,\be,\ga,r,\rho,c_{12},c_{13},c_{14},c_{15}$ depend
on the peculiarities of the elliptic curve $E$, like e.g. its rank and 
Mordell-Weil group which by no means can be expressed uniformly in
terms of $N$.
Thus, we have the following:
\begin{theorem}
                \label{thm furnishing ub for M}
If $|u(P)|\geq 3|N|$, then either $M\leq c_{12}$ or
\[
\rho M^2\leq c_{13}(\log(\alpha M+\beta)+c_{14})
(\log\log(\alpha M+\beta)+c_{15})^{r+3}+\gamma
+\log 0.085+
\textstyle{\frac{1}{2}}\log(200|N|^3).
\]
\end{theorem}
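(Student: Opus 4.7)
The plan is to deduce the stated inequality by specializing the general master inequality \eqref{eq furnishes ub for M}, which is Theorem 9.1.3 of \cite{Nikosbook} applied in the ``case of Theorem 8.7.2'', to the concrete elliptic curve $E$ of \eqref{eq E} and the Puiseux data assembled in \eqref{Puiseux_general}--\eqref{eq P0}. The general statement already guarantees the dichotomy ``either $M\leq c_{12}$ or \eqref{eq furnishes ub for M} holds'' as soon as $|u(P)|\geq\max\{B_2,B_3\}$; what remains is to record the explicit values of the ``universal'' constants $B_2,B_3,\theta,c_9,c_{10},c_{11}$ and to substitute.

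First I would invoke Appendix \ref{appendix B0} to set $B_0=|N|+1$, which legitimises the Puiseux identity $v=v_1(u)$ for every integer solution with $|u|\geq|N|+1$; this fixes $B_3=|N|+1$. Next I would invoke Appendix \ref{appendix theta,c9,c10,c11}, whose computations specialised to our curve give $B_2=3|N|$, $\theta=1$, $c_9=0.17$, $c_{10}=\log(200|N|^3)$, and $c_{11}=2$. Since $|N|\geq 1$, the hypothesis $|u(P)|\geq 3|N|$ already implies $|u(P)|\geq\max\{B_2,B_3\}$, so the hypotheses of Theorem 9.1.3 are met and the dichotomy applies.

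Substituting the above values into the right-hand side of \eqref{eq furnishes ub for M} is now a matter of arithmetic. The coefficient $\tfrac{c_{11}c_{13}}{2\theta}$ collapses to $c_{13}$, so the first summand becomes
\[
c_{13}(\log(\alpha M+\beta)+c_{14})(\log\log(\alpha M+\beta)+c_{15})^{r+3}.
\]
The penultimate summand becomes
\[
\frac{c_{11}}{2\theta}\log\frac{c_9}{1+\theta}=\log\frac{0.17}{2}=\log 0.085,
\]
and the last summand becomes $\tfrac{1}{2}c_{10}=\tfrac{1}{2}\log(200|N|^3)$. Assembling these and keeping the constants $\alpha,\beta,\gamma,\rho,c_{12},c_{13},c_{14},c_{15}$ symbolic (they depend on the Mordell--Weil data of $E$, which is not uniform in $N$) yields exactly the inequality of the theorem.

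The only genuine effort lies in the two appendix calculations behind $B_0$ and behind $(\theta,c_9,c_{10},c_{11})$, not in the present deduction. The main point that needs care is confirming that these constants really can be chosen \emph{independently of $N$} (apart from the transparent $|N|^3$ factor in $c_{10}$); once that uniformity is granted, the body of the proof is a purely mechanical substitution into the master inequality \eqref{eq furnishes ub for M}.
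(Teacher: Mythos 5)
Your proposal follows the same path as the paper: this theorem is exactly \cite[Theorem 9.1.3]{Nikosbook} in the ``case of Theorem 8.7.2,'' with the constants $B_2=3|N|$, $B_3=|N|+1$, $\theta=1$, $c_9=0.17$, $c_{10}=\log(200|N|^3)$, $c_{11}=2$ supplied by Appendices \ref{appendix B0} and \ref{appendix theta,c9,c10,c11}, after which the inequality \eqref{eq furnishes ub for M} collapses by the arithmetic you carry out ($\tfrac{c_{11}c_{13}}{2\theta}=c_{13}$, $\tfrac{c_{11}}{2\theta}\log\tfrac{c_9}{1+\theta}=\log 0.085$, $\tfrac12 c_{10}=\tfrac12\log(200|N|^3)$). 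Your substitutions and the observation that $\max\{B_2,B_3\}=3|N|$ for nonzero integer $N$ are both correct, so this is essentially the paper's own (inline) argument.
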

\subsection{Equation \eqref{guv} with $d=-1$} 
               \label{subsec d=-1}
Since $(N^3-N)/6=-1$ for $N=-2$, we can apply the general discussion of 
Section \ref{subsec d=(N^3-N)/6}, the notation of which will be used 
throughout the present section. We have
\begin{align}\label{g_d=-1}
C: g(u,v)=0, \quad \mbox{where}\quad g(u,v)=15u^3-v^3+4v^2-15u-3v+90
\end{align}
and
\begin{align}\label{fd1}
E: y^2=x^3-1575x-12451725=: f(x).
\end{align}
$E(\Q)$ has rank 5 (in the notation of 
Theorem \ref{thm furnishing ub for M} $r=5$) and trivial torsion subgroup
(in subsequent notation $r_0=1$). The free part of $E(\Q)$ is generated by the points
\[
 P_{1}^{E}=\left(235,395\right),\;
 P_{2}^{E}=\left(615,14805\right),\;
P_{3}^{E}=\left(3055,168805\right),
\]
\[
P_{4}^{E}=\left(1350, 49455\right),\;
P_{5}^{E}=\left(\frac{1185}{4},-\frac{28935}{8}\right).
\]

Actually, the Mordell-Weil basis formed by the above five points is an
improvement of the Mordell-Weil basis furnished by {\sc magma}, in the
sense of the ``Important computational issue'' of Appendix \ref{LLL_d_-1}. 

\noindent
The birational transformation between the models $C$ and $E$ is: 
%
\[
\mathcal{X}(u,v)= \frac{3(40u^2+55uv+v^2-60u+106v-277)}{(u+2)^2}
\] 
\label{page X(u,v),Y(u,v) when d=-1}
\[
\mathcal{Y}(u,v)= 
\frac{3(2505u^3+90u^2v+220uv^2+5595u^2-685uv+437v^2-6360u-1718v-15069)}
{(u+2)^3},
\]
and
\begin{eqnarray}
\mathcal{U}(x,y) & = & \frac{2x^3-60x^2+3xy-49455x+26865y+68298525}
 {-x^3+360x^2-20925x+66442950} \nonumber 
 \\ & & \label{UVd_-1} \\
\mathcal{V}(x,y) & = & \frac{15(18x^2+11xy+80325x-1311y+8004285)} 
   {-x^3+360x^2-20925x+66442950} \nonumber
\end{eqnarray}
By \eqref{Puiseux_general} and the discussion immediately after it,
for every real solution of $g(u,v)=0$ with $|u|\geq 3$ it is true that 
$v=v_1(u)$, where
\begin{align}\label{Puiseux_d_-1}
v_1(u)= &\nonumber\,\zeta u+\frac{4}{3}+\left(\dfrac{7}{135}\zeta^2
-\dfrac{1}{3}\zeta\right)u^{-1}+\dfrac{490}{243}\zeta u^{-2}
+\left(\dfrac{7}{405}\zeta^2-\dfrac{1}{9}\zeta\right)u^{-3}
\\  &+\left(-\dfrac{686}{6561}\zeta^2+\dfrac{980}{729}\zeta\right)u^{-4}+\ldots.
\end{align}
Also, by \eqref{eq P0}, 
\[
P_{0}^{E}=(3\zeta^2+165\zeta+120,\; 660\zeta^2+270\zeta+7515),
\] 
where $\zeta$ is the cubic root of $15$. 

\noindent
Referring to the discussion of Section \ref{sec intro}, we consider
the linear form 
\label{page def L(P) when d=-1}
\[
L(P)=\left(m_0+\dfrac{s}{t}\right)
\omega_1+m_1\ellog(P_1)+m_2\ellog(P_2)
+m_3\ellog(P_3)+m_4\ellog(P_4)+m_5\ellog(P_5)
\pm\ellog(P_0).
\]
Since $f(X)$ has only one real root, namely $e_1\approx234.0452973361$, 
we have $E(\R)=E_0(\R)$, therefore $\mathfrak{l}(P_i)$ coincides with the 
elliptic logarithm of $P_i^E$ for $i=1,\ldots,5$
(see Chapter 3 of \cite{Nikosbook}, especially, 
Theorem 3.5.2). 
On the other hand, $P_0^E$ has irrational coordinates.  
As {\sc magma} does not possess a routine for calculating elliptic
logarithms of non-rational points, we wrote our own routine in {\sc maple}
for computing $\ellog$-values of points with algebraic coordinates.  
\label{page my maple routine}
Thus we compute 
\[
\mathfrak{l}(P_1)\approx-0.0771021779, \quad
\mathfrak{l}(P_2)\approx-0.0404989783,\quad
\mathfrak{l}(P_3)\approx-0.0180931954,
\]
\[
\mathfrak{l}(P_4)\approx-0.0272287725,\quad
\mathfrak{l}(P_5)\approx0.0607913520, \quad
\mathfrak{l}(P_0)\approx0.1159496335.
\]
Note that the six points $P_i^E,\,i=0,1,\ldots,5$  are $\Z$-linearly 
independent because their regulator is non-zero 
(see \cite[Theorem 8.1]{Sch Zim}).
Therefore our linear form $L(P)$ falls under the scope of the 
second ``bullet'' in \cite[page 99]{Nikosbook} 
and we have $r_0=1$, $s/t=s_0/t_0=0/1=0$, $d=1$, $r=5$, $n_i=m_i$ for 
$i=1,\ldots,4$, $n_5=\pm1$, $n_0=m_0$, $k=r+1=6$, 
$\eta=1$ and $N=\max_{0\leq i\leq 5}|n_i|
\leq r_0\max\{M,\frac{1}{2}rM+1\}+\frac{1}{2}\eta r_0
=\frac{5}{2}M+\frac{3}{2}$, 
so that, in the relation (9.6) of \cite{Nikosbook} we can take 
\begin{equation}
                \label{eq d=-1, alpha,beta}
\alpha=5/2, \beta=3/2.                
\end{equation}
We compute the canonical heights of $P_1^E, P_2^E, P_3^E, P_4^E,P_5^E$ 
using {\sc magma}\footnote{For the definition of the 
canonical height we follow J.H.~Silverman; as a consequence the values 
displayed here for the canonical heights are the halves of those computed 
by {\sc magma} and the least eigenvalue $\rho$ of the height-pairing matrix 
$\mathcal{H}$ below, is half that computed by {\sc magma}; 
cf.~``Warning'' at bottom of p.~106 in \cite{Nikosbook}.
\label{foot canonical height}} 
and for the canonical height of $P_0^E$ we confine ourselves to the upper 
bound furnished by Lemma \ref{lemma hP0}. Thus we have
\[
\hat{h}(P_1^E)\approx2.2913414307, \quad 
\hat{h}(P_2^E)\approx2.0649979264, \quad 
\hat{h}(P_3^E)\approx3.3258621376,
\]
\[\hat{h}(P_4^E)\approx2.5707390271, \quad 
\hat{h}(P_5^E)\approx2.6752327982, \quad \hat{h}(P_0^E)\leq 7.300572483\,.
\]
The corresponding height-pairing matrix for the particular Mordell-Weil 
basis is
\[
\mathcal{H}=\left(\begin{array}{rrrrr}
2.2913414307 & 1.0192652309 & 1.5359254535 & -1.2315944080 & 
-0.77710896815\\
1.0192652309 & 2.0649979264 & 0.3597655203 & -0.4612024943 & 
0.3804341218\\
1.5359254535 & 0.3597655203 & 3.3258621376 & -1.9571170828 & 
-1.9878905154\\
-1.2315944080 & -0.4612024943 & -1.9571170828 & 2.5707390271 & 
1.3907956375\\
-0.7771089681 & 0.3804341218 & -1.9878905154 & .3907956375 & 2.6752327982
\end{array}\right)
\]
with minimum eigenvalue 
\begin{equation}
       \label{eq d=-1,rho}
  \rho\approx0.7722274789.       
\end{equation}
Next we apply \cite[Proposition 2.6.3]{Nikosbook} in order to compute 
a positive constant $\gamma$ 
with the property that 
$\hat{h}(P^E)-\frac{1}{2}h(x(P))\leq \gamma $ for every point 
$P^E=(x(P),y(P))\in E(\Q)$, where $h$ denotes Weil height;
\label{page def gamma}
\footnote{In the notation of \cite[Proposition 2.6.3]{Nikosbook}, 
as a curve $D$ we take the minimal model of $E$ which is $E$ itself.}
it turns out that
\begin{equation}
    \label{eq d=-1, gamma}
\gamma\approx 4. 6451703657.
\end{equation}
Finally, we have to specify the constants $c_{12},c_{13},c_{14},c_{15}$
defined in \cite[Theorem 9.1.2]{Nikosbook}. 
This is a rather straightforward task if one follows the detailed 
instructions of \cite[``Preparatory to Theorem 9.1.2'']{Nikosbook} which 
can be carried out even with a pocket calculator, except for the 
computation of various canonical heights. Clearly, this is quite
a boring job which, fortunately, can be carried out almost automatically 
with a {\sc maple} program. In this way we compute 
\begin{equation}
                \label{eq d=-1, c12,c13,c14,c15}
c_{12}\approx1.210103\cdot10^{27},\quad 
c_{13}\approx1.342820\cdot10^{281},\quad 
c_{14}\approx2.09861,\quad c_{15}\approx25.03975.                
\end{equation}
Now, in view of Theorem \ref{thm furnishing ub for M} and 
\eqref{eq d=-1, alpha,beta}, \eqref{eq d=-1,rho}, 
\eqref{eq d=-1, gamma}, \eqref{eq d=-1, c12,c13,c14,c15},
we conclude that, if $|u(P)|\geq 6$, then either $M\leq c_{12}$ or 
\begin{eqnarray*}
\lefteqn{0.77222\cdot M^2 \leq} 
\\
 & & 1.34\cdot10^{281}\cdot(\log(2.5M+1.5)
+2.0986)\cdot(\log(0.4342\log(2.5M+1.5))+25.0397)^5+5.4159.
\end{eqnarray*}
But for all $M\geq 6.3\cdot10^{147}$, we check that the left-hand side
is strictly larger than the right-hand side which implies that
$M < 6.86\cdot10^{147}$, therefore
\begin{equation}\label{BM_d_-1}
M\leq\max\{c_{12},\; 6.86\cdot10^{147}\}=6.86\cdot10^{147}
\quad\mbox{provided that $|u(P)|\geq 6$.}
\end{equation}
An easy straightforward computation shows that all integer points $P^C$ 
with $|u(P)|\leq 5$ (equivalently, all integer solutions $(u,v)$ of 
\eqref{g_d=-1} with $|u|\leq 5$) are the following:
\begin{equation}
                \label{eq d=-1 small pts}
 P^C=(-2,0),\; (-2,1),\; (-2,3),\; (-1,6),\;(0,6),\;(1,6).               
\end{equation} 
In order to find explicitly all points $P^C$ with $|u(P)|\geq 6$ it is
necessary to reduce the huge upper bound \eqref{BM_d_-1} to an upper 
bound of manageable size. This is accomplished in 
Appendix \ref{LLL_d_-1}, where we show that $M\leq 27$. 
Therefore, we have to check which points  
\[
P^E=m_1P_1^E+m_2P_2^E+m_3P_3^E+m_4P_4^E+m_5P_5^E,
   \quad\mbox{with $\max_{1\leq i\leq 5}|m_i|\leq 27,$}
\]   
have the property that $P^E=(x,y)$ maps via the transformation 
\eqref{UVd_-1} to a point $P^C=(u,v)\in C$ with integer 
coordinates. 
We remark here that every point $P^C$ with $u(P)$ integer
and $|u(P)|\geq 6$ is obtained in this way, but the converse is not 
necessarily true; i.e. if $\max_{1\leq i\leq 5}|m_i|\leq 27$ and 
the above $P^E$ maps to $P^C$ with integer coordinates, it is not
necessarily true that $|u(P)|\geq 6$.

\noindent

If we were going to check all 5-tuples $(m_1,m_2,m_3,m_4,m_5)$ in the 
range $-27\leq m_i\leq 27$ by ``brute force'' this would take more than 
$15$ days of computation. 
Therefore, we apply a simple but very effective trick to speed up 
this final search.
This trick, called in \cite{Stro Tza} \emph{inequality trick}, 
\label{page ineq trick}
is based in the observation that, for every $5$-tuple 
$(m_1,m_2,m_3,m_4,m_5)$ corresponding to a 
point $P^E=m_1P_1^E+m_2P_2^E+m_3P_3^E+m_4P_4^E+m_5P_5^E$, the upper bound
of $|L(P)|$ mentioned just above \eqref{eq furnishes ub for M}, more
specifically, 
\begin{equation}
                \label{eq ub |L(P)|}
 |L(P)| \leq k_1\exp(k_2-k_4M^2)                
\end{equation}
must be satisfied for the six-tuple $(m_0,m_1,\ldots,m_5)$ where 
$m_0$ is the extra parameter appearing in \eqref{eq L(P) general}
with $|m_0|\leq 27$.
The heuristic observation is that the above inequality is 
very unlikely to be satisfied for points $P^E$, with 
at least one large coefficient $m_i$. The reason is that the elliptic 
logarithms $\ellog(P_i)$ are more or less randomly distributed 
(at least there is no reason to assume otherwise) so that the linear 
$L(P)$ is rarely very small. 
Checking whether the $L(P)$ coming from a certain $6$-tuple 
$(m_0,m_1,m_2.m_3,m_4,m_5)$ in the range $-27\leq m_i\leq 27$ satisfies 
the above displayed inequality requires real number computations which 
are considerably faster than those required for symbolically computing   
$P^E=m_1P_1^E+m_2P_2^E+m_3P_3^E+m_4P_4^E+m_5P_5^E$ and then
checking whether the corresponding point $P^C$ is integral.
Actually, this reduces the computation to a few hours and furnishes
us with the points figuring in 
Table \ref{Table All integer points_d_-1}.

{\bf Important remark}. As mentioned in the ``Important computational
issue'' at the end of Appendix \ref{LLL_d_-1}, the online {\sc magma} 
calculator ({\tt V2.24-3})
returns a different Mordell-Weil basis for the elliptic curve \eqref{fd1}.
The value of $\rho$ corresponding to that basis is $\rho\approx 0.410937$.
As a consequence, the initial upper bound for $M$ (cf.~\eqref{BM_d_-1})
is $M<8.63\cdot 10^{147}$ and after four reduction steps, the final reduced 
upper bound is 34. Therefore the final check for all 6-tuples 
$(m_0,m_1,\ldots,m_5)$ in the range $-34\leq m_i\leq 34$ needs at
least four times ($4\approx (34/27)^6$) more computation time; actually, 
it needs much more according to our experiments.
\begin{center}
\tablecaption{All points $P^E=\Sigma_i m_iP_i^E$ with 
$P^C=(u,v)\in\Z\times\Z$.} 
                            \label{Table All integer points_d_-1}  
\tablefirsthead{ 
\multicolumn{7}{c}{} \\ \hline
& & & & & & \\[-12pt]
$m_1$ & $m_2$ & $m_3$ & $m_4$ & $m_5$ & $P^E=(x,y)$ & $P^C=(u,v)$ \\ 
\hline\hline
               }
\tablehead{ \hline
\multicolumn{7}{|l|}{{\small \sl continued from previous page}} \\ \hline
$m_1$ & $m_2$ & $m_3$ & $m_4$ & $m_5$ & $P^E=(x,y)$ & $P^C=(u,v)$
  \\
          }          
 \tabletail{\hline \multicolumn{4}{|r|}{\small\sl continued on next page}
 \\ \hline}
\tablelasttail{\hline\hline }
 \begin{mpxtabular}{|r|r|r|r|r|c|c|}
 $-1$ & $0$ & $0$ & $-1$ & $1$ & $(27075, -4455045)$ & $(-2,1)$ 
 \\ \hline
 $-1$ & $0$ & $0$ & $0$ & $0$ & $(235, -395)$ & $(1,6)$ 
 \\ \hline
  $0$ & $0$ & $-1$ & $-1$ & $0$ & $(495, -10395)$ & $(-1,6)$
 \\ \hline
$0$ & $0$ & $-1$ & $0$ & $-1$ & $(555, 12555)$ & $(-138, -339)$ 
  \\ \hline
 $0$ & $0$ & $-1$ & $0$ & $0$ & $(3055, -168805)$ & $(-2,3)$ 
 \\ \hline
$0$ & $0$ & $0$ & $0$ & $1$ & $(1185/4, -28935/8)$ & $(0,6)$ 
 \\ \hline
 \end{mpxtabular}
 \end{center}
%
Note that only the point $P^C$ which corresponds to 
$(m_1,m_2,m_3,m_4,m_5)=(0,0,-1,0,-1)$ has $|u(P)|\geq 6$. All other
points $P^C=(u,v)$, although they correspond to  
$(m_1,m_2,m_3,m_4,m_5)$ with $\max_{1\leq i\leq 5}|m_i|\leq 1$,
have $|u|<6$. These five points are of course contained in the 
already found list of points \eqref{eq d=-1 small pts}, which contains 
one more point, namely $(u,v)=(-2,0)$, because this point cannot 
correspond via the (affine) birational transformation to a point $P^E$; 
cf.~page \pageref{page X(u,v),Y(u,v) when d=-1}.
We have thus proved the following:
\begin{theorem}
               \label{thm d=-1,main}
The integer solutions of the equation \eqref{g_d=-1} are
$$(u,v)= (-138,-339),\; (-2,0),\; (-2,1),\; (-2,3),\; (-1,6),\;
(0,6),\;(1,6).$$             
\end{theorem}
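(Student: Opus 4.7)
The plan is to reduce the problem to computing integer points on the cubic $C$ of \eqref{g_d=-1}, which via the birational transformation of Section~\ref{subsec d=(N^3-N)/6} correspond to certain rational points on the Weierstrass model $E$ of \eqref{fd1}, and then apply the Elliptic Logarithm Method described in Section~\ref{sec intro}. The first step is to determine the structure of $E(\Q)$: compute its rank and torsion using {\sc magma}'s \texttt{MordellWeilBasis} routine and produce an explicit basis $P_1^E,\ldots,P_5^E$. An important subtlety is that the basis returned by {\sc magma} may not be optimal with respect to the minimum eigenvalue of the height-pairing matrix, and a basis improvement in the sense of the ``Important computational issue'' of Appendix~\ref{LLL_d_-1} is needed here since $r=5$; this improvement directly affects the size of the final bound on $M$ one obtains.

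Once the basis is fixed, the typical integer solution $(u,v)$ of $g(u,v)=0$ with $|u|\geq 3$ corresponds, via the Puiseux expansion \eqref{Puiseux_d_-1}, to a point $P^E$ whose coordinates can be series-expanded in $t=1/u$. This yields the linear form $L(P)$ on page~\pageref{page def L(P) when d=-1} in the elliptic logarithms of the $P_i^E$ together with the special point $P_0^E$; since $P_0^E$ has irrational coordinates, computing $\ellog(P_0)$ requires a custom routine as indicated on page~\pageref{page my maple routine}. Upper-bounding $|L(P)|$ by the tail of the Puiseux series in terms of $M$, and lower-bounding it via S.~David's theorem, produces the inequality of Theorem~\ref{thm furnishing ub for M}. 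To make this concrete one computes (i) the minimum eigenvalue $\rho$ of the height-pairing matrix, (ii) the constant $\gamma$ of \cite[Proposition 2.6.3]{Nikosbook} bounding $\hat{h}(P^E)-\tfrac{1}{2}h(x(P))$, and (iii) the David-type constants $c_{12},c_{13},c_{14},c_{15}$. Substituting yields an initial upper bound for $M$ of order $10^{148}$.

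The main obstacle is then reducing this astronomical bound to a searchable size. The tool is de Weger's iterated LLL-based reduction: at each pass one constructs a lattice encoding the near-vanishing of $L(P)$, applies LLL, and uses a lower bound on the first successive minimum to shrink the admissible range of $M$; a few passes would then yield $M\leq 27$, which is what Appendix~\ref{LLL_d_-1} carries out. The quality of the Mordell-Weil basis is again critical here, because, as stressed in the ``Important remark'' of the excerpt, a suboptimal basis inflates both the starting bound and the size $(2M+1)^{r+1}$ of the eventual enumeration.

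Finally we enumerate. For $|u|\leq 5$ a direct integer search in \eqref{g_d=-1} returns the six small solutions listed in \eqref{eq d=-1 small pts}. For $|u|\geq 6$ we must check all $6$-tuples $(m_0,m_1,\ldots,m_5)$ with $|m_i|\leq 27$; a brute-force sweep over $55^6$ tuples would take weeks, so we apply the inequality trick of \cite{Stro Tza}, first testing the fast floating-point inequality \eqref{eq ub |L(P)|} before computing the group law and the backward transformation \eqref{UVd_-1}. This returns the tuples in Table~\ref{Table All integer points_d_-1}. Of the six resulting points $P^C$, only $(u,v)=(-138,-339)$ satisfies $|u|\geq 6$; the other five reproduce points already found by the small search, while $(u,v)=(-2,0)$ is picked up only from the $|u|\leq 5$ search because it does not come from any affine rational point of $E$. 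Combining the two lists yields exactly the seven solutions stated in the theorem.
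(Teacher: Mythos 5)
Your proposal follows the paper's own argument essentially step for step: reduce to integer points on $C$, pass to the Weierstrass model $E$ via the van Hoeij birational map, take the improved rank-$5$ Mordell--Weil basis, form the linear form in elliptic logarithms (including the irrational auxiliary point $P_0^E$) and bound it above via the Puiseux expansion and below via David's theorem, obtain the initial bound $M\lesssim 10^{148}$, reduce with de Weger's LLL iteration to $M\le 27$, and finish with the small search for $|u|\le 5$ plus the inequality-trick sweep for $|u|\ge 6$, taking care to note that $(-2,0)$ arises only from the direct small search since it has no affine preimage on $E$. This is the same route the paper takes, with no meaningful deviation, so there is nothing to contrast.
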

\begin{corollary}
            \label{corol (k,l)=(3,6)}
No $(3,6)$ near-collision with difference 1 exists.
\end{corollary}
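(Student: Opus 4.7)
The plan is to deduce the corollary directly from Theorem \ref{thm d=-1,main} by matching the near-collision conditions against the finite list of integer solutions already produced.

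First I would unwind the definitions. A $(3,6)$ near-collision with difference $1$ is, by definition, a quadruple $(n,3,m,6)$ of positive integers satisfying $2\leq 3\leq n/2$, $2\leq 6\leq m/2$ and $\binom{m}{6}-\binom{n}{3}=1$ (the condition $\binom{m}{6}\geq 1^3$ is automatic). Equivalently, $n\geq 6$, $m\geq 12$, and $\binom{n}{3}=\binom{m}{6}+d$ with $d=-1$, which is exactly equation \eqref{eq d-collision} in the case handled in Section \ref{subsec d=-1}.

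Next I would translate these constraints through the substitution $u=n-1$, $v=(m-2)(m-3)/2$ used to derive \eqref{g_d=-1}: the inequalities $n\geq 6$ and $m\geq 12$ become $u\geq 5$ and $v\geq 45$ respectively. So a $(3,6)$ near-collision with difference $1$ would produce an integer solution $(u,v)$ of \eqref{g_d=-1} with $u\geq 5$ (and in particular $v\geq 45$).

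Finally I would simply inspect the seven solutions furnished by Theorem \ref{thm d=-1,main}, namely
\[
(u,v)\in\{(-138,-339),\,(-2,0),\,(-2,1),\,(-2,3),\,(-1,6),\,(0,6),\,(1,6)\},
\]
and observe that in every one of them $u\leq 1<5$, so none of them satisfies the necessary condition $u\geq 5$. Consequently no $(3,6)$ near-collision with difference $1$ exists, proving the corollary.

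There is no real obstacle here: all the analytic work has been packaged into Theorem \ref{thm d=-1,main}, and the corollary reduces to checking the near-collision inequalities against a short explicit list. The only point requiring attention is the sign convention, namely recalling that replacing $d$ by $-d$ in \eqref{eq general collision} converts the $d=1$ case of the near-collision problem into the $d=-1$ case of \eqref{eq d-collision} treated in Section \ref{subsec d=-1}.
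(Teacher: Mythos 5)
Your proof is correct and follows essentially the same route as the paper's: reduce to equation \eqref{eq d-collision} with $d=-1$, pass to \eqref{g_d=-1} via $u=n-1$, $v=(m-2)(m-3)/2$, and check the finite list from Theorem~\ref{thm d=-1,main}. The only cosmetic difference is that you correctly deduce $u\geq 5$ from $n\geq 6$ (the paper writes $u\geq 7$, an inconsequential overstatement since every listed solution has $u\leq 1$).
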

\begin{proof}
Assume that $(n,3,m,6)$ is a near collision with difference 1. Then 
$\binom{m}{6}-\binom{n}{3}=1$, which is equation 
\eqref{eq d-collision} with $d=-1$. 
At the beginning of Section \ref{sec (k,l)=(3,6)} we saw that if we put 
$u=n-1$ and $v=(m-2)(m-3)/2$, then
$(u,v)$ is an integer solution of the equation \eqref{eq reduce initial}
with $d=-1$, i.e. $(u,v)$ is an integral point on the curve \eqref{g_d=-1}. 
By the restrictions on the definition of collision, $n\geq 6$, 
so $u\geq 7$ and by Theorem \ref{thm d=-1,main}, no solution $(u,v)$ to 
\eqref{g_d=-1} exists with $u\geq 7$.  
\end{proof}

\subsection{Other cases with $d=(N^3-N)/6$}
               \label{othercases}

From the discussion at the beginning of Section \ref{subsec d=(N^3-N)/6}
and \eqref{guv} we will deal with the elliptic curve
$ C\,:\, 15u^3-v^3+4v^2-15u-3v-90d $.
\\ \noindent

The birationally equivalent Weierstrass model $E$ is, by \eqref{eq E}, 
$ E\,:\, y^2=x^3-1575x+a_6(N)$,
where
\[
a_6(N)= -\dfrac{1366875}{4}N^6+\dfrac{1366875}{2}N^4+33750N^3
-\dfrac{1366875}{4}N^2-33750N+52650.
\]
Generally speaking, the method for computing all integer points on $C$ is 
completely analogous to the one we applied in Section \ref{subsec d=-1}.
Moreover, for $d=1,4,10,20$ (corresponding to $N=2,3,4,5$) the final 
checking, after the reduction process (cf.~the discussion just before the 
Table \ref{Table All integer points_d_-1}) is considerably less 
time-consuming because the ranks of the elliptic curves are at most 4.
Therefore, we think it is enough to include all necessary information in
Table \ref{Table C and E}. We remind the notation which is identical to
that of Section \ref{subsec d=-1}: 
$r$ denotes rank; torsion subgroup is trivial for every $N\geq 1$, 
therefore, ``generators'' in the table means always ``generators of 
infinite rank''.
The discriminant is negative for every $N\geq 1$ and $e_1$ is the 
sole real root of the cubic polynomial in the right-hand side of the 
defining equation of $E$. Finally, $\rho$ denotes the least eigenvalue
of the (positive definite) regulator matrix.
All points $P_i,\,i=0,1,2,3,4$ below refer to the model $E$;
for simplicity in the notation we omit the superscript $E$ from them.
%
\begin{center}
\tablecaption{$C: 15u^3-v^3+4v^2-15u-3v-90d$ and 
$E: y^2=x^3-1575x+a_6(N)$} 
 \label{Table C and E}  
\tablefirsthead{ 
\multicolumn{7}{c}{} \\ \hline
& & & & & & \\[-10pt]
$N$ & $d$ & $a_6(N)$ & $r$  & Generators  &  $\rho$ & $e_1$ 
\\ \hline\hline
               }
\tablehead{ \hline
\multicolumn{7}{|l|}{{\small \sl continued from previous page}} \\ \hline
$N$ & $d$ & $a_6(N)$ & $r$  & Generators  & $\rho$ & $e_1$  
\\ \hline         }          
\tabletail{\hline \multicolumn{7}{|r|}{\small\sl continued on next page}
 \\ \hline}
\tablelasttail{\hline\hline }
 \begin{xtabular}{|c|c|c|c|l|c|c|}
  & & & & & & \\[-10pt]
  $2$ & $1$ & $-12046725$ & $2$ & 
$P_1=(26745/4, -4373685/8)$ & $1.8907445355$   
& $231.5297170832$ \\
& & & & $P_2=(2995, 163855)$ & &
 \\ \hline
  & & & & & & \\[-10pt]
$3$ & $4$ & $-195967350$ & $2$ & $P_1=(37845, 7362270)$ & $1.9685805562$ 
& $581.7501698100$ \\
& & & & $P_2=(152325, -59450670)$ & &
 \\ \hline
  & & & & & & \\[-10pt] 
$4$ & $10$ & $-1228109850$ & $3$ & $P_1=(2530,122320)$ & $2.1464178968$  
 & $1071.3824031820$ \\
& & & & $P_2=(3414, 196362)$ & & \\
& & & & $P_3=(108705/49,33758640/343)$ & &
\\ & & & & & & \\[-25pt]
\\ \hline
 & & & & & & \\[-10pt]
$5$ & $20$ & $-4916647350$ & $4$ & $P_1=(1232475, 1368255420)$ 
                                             & $1.5758474521$   
& $1700.7293293549$      \\
& & & 
& $\displaystyle{P_2=\left(2181, 73854\right)}$ & & \\
& & & 
& $\displaystyle{P_3=\left(136825, 50611330\right)}$ & & \\
& & & & $P_4=(2235, 79020)$ & & \\             
 \end{xtabular}
 \end{center}
Remark. 
In the case $N=5$, the $\rho$-value corresponding to the set of 
generators computed by the online {\sc magma} calculator 
is 0.4945449338. For reasons explained in the remark after 
Table \ref{Table many parameters}, we would 
like to have a set of generators with a $\rho$-value as large as possible. 
By applying unimodular transformations to the basis computed by 
{\sc magma} and computing the corresponding $\rho$'s we succeeded to 
compute the basis shown in Table \ref{Table C and E}.

\begin{center}
\tablecaption{The point $P_0$ (see \eqref{eq P0}; 
\mbox{$\zeta=\sqrt[3]{15}$})} 
 \label{Table P0}  
\tablefirsthead{ 
\multicolumn{4}{c}{} \\[-15pt] \hline
& & & \\[-12pt]
$N$ & $d$ & $P_0$ & Upper bound of $\hat{h}(P_0^E)$ \\ \hline\hline
               }
\tablehead{ \hline
\multicolumn{4}{|l|}{{\small \sl continued from previous page}} \\ \hline
$N$ & $d$ & $P_0$ & Upper bound of $\hat{P_0}$
  \\
          }          
 \tabletail{\hline \multicolumn{4}{|r|}{\small\sl continued on next page}\\ \hline}
\tablelasttail{\hline\hline }
 \begin{mpxtabular}{|c|c|c|c|}
  & & &  \\[-10pt]
$2$ & $1$  
&$(3\zeta^2+165\zeta-120, -660\zeta^2+270\zeta+7335)$  
& $7.6463097298$ \\ \hline
& & & \\[-10pt]
$3$ & $4$  
&$(3\zeta^2+390\zeta-180, 1560\zeta^2-405\zeta-26235)$  
& $8.539616384$ \\ \hline
& & & \\[-10pt]
$4$ & $10$ 
& $(3\zeta^2+705\zeta-240, 2820\zeta^2-540\zeta-63360)$
& $9.141125914$ \\ \hline
& & & \\[-10pt] 
$5$ & $20$ 
& $(3\zeta^2+1110\zeta-300, 4440\zeta^2-675\zeta-124785)$
& $13.29809473$ 
 \\             
 \end{mpxtabular}
 \end{center}
\noindent
Completely analogously to the case $d=-1$ in Section \ref{subsec d=-1}, 
in order to obtain an upper bound of $M$ we compute the parameters 
$\alpha,\beta, c_{12},c_{13},c_{14},c_{15}$, 
as well as the analogous to
those just above relation \eqref{eq d=-1, alpha,beta},
and apply Theorem \ref{thm furnishing ub for M}, according to which, 
either $M\leq c_{12}$ or  \eqref{eq furnishes ub for M} holds. 
Always $k=r+1$ and $\alpha,\beta$ are ``very small'' 
integers explicitly calculable. We remind that the parameters $c_{12},c_{13},c_{14},c_{15}$ are defined in 
\cite[Theorem 9.1.2]{Nikosbook} and calculated according to the
instructions in the ``Preparatory to Theorem 9.1.2'' therein.
The values of these parameters are shown in  
Table \ref{Table many parameters}.
%
\begin{center}
\tablecaption{Parameters in the computations of an upper bound for $M$} 
 \label{Table many parameters}  
\tablefirsthead{ 
\multicolumn{10}{c}{} \\[-5mm] \hline
 & & & & & & & & & \\[-10pt]
$n$ & $d$ & $c_{12}$ & $c_{13}$ & $c_{14}$ & $c_{15}$ 
& $\alpha$ & $\beta$ & $\gamma$  & $k$ 
\\ \hline\hline
              }
\tablehead{ \hline
\multicolumn{10}{|l|}{{\small \sl continued from previous page}} \\ \hline
$n$ & $d$ & $c_{12}$ & $c_{13}$ & $c_{14}$ & $c_{15}$ 
& $\alpha$ & $\beta$ & $\gamma$ & $k$  \\
          }          
 \tabletail{\hline \multicolumn{10}{|r|}
                  {\small\sl continued on next page}\\ \hline}
\tablelasttail{\hline\hline }
 \begin{mpxtabular}{|c|c|c|c|c|c|c|c|c|c|}
  & & & & & & & & & \\[-10pt]
$2$ & $1$ & $1.010\cdot10^{27}$ & $1.162\cdot10^{116}$ & $2.098$ 
& $24.097$ & $1$ & $3/2$ & $4.6396592897$ & $3$ \\ \hline
 & & & & & & & & &  \\[-10pt]
$3$ & $4$ & $2.074\cdot10^{30}$ & $1.841\cdot10^{116}$ & $2.098$ 
& $27.779$ & $1$ & $3/2$ & $5.1045188249$ & $3$ \\ \hline
 & & & & & & & & &  \\[-10pt]
$4$ & $10$ & $4.469\cdot10^{34}$ & $1.332\cdot10^{163}$ & $2.098$ 
& $31.449$ & $3/2$ & $3/2$ & $5.4103994087$ & $4$ \\ \hline
 & & & & & & & & & \\[-10pt]
 $5$ & $20$ & $8.421\cdot10^{37}$ & $3.181\cdot10^{218}$ & $2.098$ 
 & $34.224$ & $2$ & $3/2$ & $5.64159117300$ & $5$ \\       
 \end{mpxtabular}
 \end{center}
%
The upper bounds $B(M)$ of $M$ and the respective reduced upper bounds
which are obtained by a reduction process completely analogous to that of
the case $d=-1$ (Appendix \ref{LLL_d_-1}) are shown in 
Table \ref{Table B(M)}.
Finally we pick all points $P^E=\sum_im_iP_i^E$ with $|m_i|$ less
that the reduced bound, such that their corresponding
point $P^C$ has integer coordinates, as discussed at the end of
Section \ref{sec intro}. Our results are shown in 
Table \ref{Table All integer points_n}.

%
\begin{center} 
\tablecaption{Upper bounds of $M$} 
 \label{Table B(M)}  
\tablefirsthead{%
\multicolumn{4}{c}{} \\[-10pt]\hline
& &  & \\[-10pt]
$N$ & $d$ & $B(M)$: Initial bound & Reduced bound  
\\ \hline \hline
              }
\tablehead{ \hline 
\multicolumn{4}{|l|}{{\small \sl continued from previous page}} 
\\ \hline \hline
& &  & \\[-10pt]
$N$ & $d$ & $B(M)$: Initial bound & Reduced bound 
  \\  \hline
          }   
\tabletail{
\hline \multicolumn{4}{|r|}{\small\sl continued on next page}\\ \hline
          }
\tablelasttail{\hline\hline }
 \begin{xtabular}{|c|c|c|c|} 
 & & &  \\[-10pt] 
$2$ & $1$ & $4.34\cdot10^{62}$ & $14$ \\ \hline
 & & & \\[-10pt]
$3$ & $4$ & $6.74\cdot10^{62}$ & $21$ \\ \hline
  & & & \\[-10pt]
$4$ & $10$ & $1.64\cdot10^{87}$ & $26$  \\ \hline
 & & & \\[-10pt]
 $5$ & $20$ & $1.54\cdot10^{116}$  & $13$
 \\            \hline 
 \end{xtabular}
  \end{center}
%
%
\begin{center}
\tablecaption{All points $P^E=\Sigma_i m_iP_i^E$ with 
$P^C=(u,v)\in\Z\times\Z$.} 
                            \label{Table All integer points_n}  
\tablefirsthead{ 
\multicolumn{4}{c}{} \\[-15pt] \hline
& & & \\[-12pt]
$N$ & $d$ & $P^E=(x,y)$ & $P^C=(u,v)$ \\ \hline\hline
               }
\tablehead{ \hline
\multicolumn{4}{|l|}{{\small \sl continued from previous page}} \\ \hline
$N$ & $d$ & $P^E=(x,y)$ & $P^C=(u,v)$
  \\
          }          
 \tabletail{\hline \multicolumn{4}{|r|}{\small\sl continued on next page}\\ \hline}
\tablelasttail{\hline\hline }
 \begin{xtabular}{|c|c|c|c|}
$2$ & $1$ & $(2995, -163855)$ & $(2, 3)$
 \\ \hline
$3$ & $4$ & $(16855, -2188180),\; (152325, -59450670)$ & $(3, 3),\; (3,1)$
 \\ \hline
$4$ & $10$ & $(108705/49, -33758640/343),\; (55165, -12956680)$
& $(11,28),\; (4,3)$  \\
& & $(497325, -350719920)$ & $(4,1)$
\\ \hline
$5$ & $20$ & $(1232475, -1368255420),\quad (136825, -50611330)$ 
& $(5, 1),\;(5, 3)$
 \\             
 \end{xtabular}
 \end{center}

We have thus proved the following:
\begin{theorem}
               \label{thm various d,main}
For $d\in\{1,4,10,20\}$ all integer solutions of the equation
\eqref{eq reduce initial} are those listed in the fourth column of 
Table \ref{Table All integer points_n}.           
\end{theorem}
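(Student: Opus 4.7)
The plan is to run, for each of the four values $N\in\{2,3,4,5\}$ (equivalently $d\in\{1,4,10,20\}$), the same machine that was developed in detail for $d=-1$ in Section \ref{subsec d=-1}. All the case-specific numerical data needed to feed that machine have already been tabulated: the Weierstrass model $E:y^2=x^3-1575x+a_6(N)$, its rank $r$, a Mordell--Weil basis, the least eigenvalue $\rho$ of the height-pairing matrix, the real root $e_1$, and the auxiliary point $P_0^E$ with an upper bound for $\hat h(P_0^E)$, are assembled in Tables \ref{Table C and E} and \ref{Table P0}; the linear-form parameters $\alpha,\beta,\gamma,k$ and the David constants $c_{12},c_{13},c_{14},c_{15}$ are assembled in Table \ref{Table many parameters}. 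What remains is to explain, case by case, that feeding these into Theorem \ref{thm furnishing ub for M} and then reducing leads to exactly the list in the fourth column of Table \ref{Table All integer points_n}.

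First, for every $N\in\{2,3,4,5\}$ the Puiseux-series analysis of Section \ref{subsec d=(N^3-N)/6} guarantees that the bound $B_0=|N|+1$ from Appendix \ref{appendix B0} is valid, so Lemma 8.3.1 of \cite{Nikosbook} applies with the single real Puiseux series $v_1(u)$, and the distinguished point $P_0^E$ of \eqref{eq P0} is the one listed in Table \ref{Table P0}. Next, I form the linear form $L(P)$ exactly as on page \pageref{page def L(P) when d=-1}, using the given Mordell--Weil basis and $P_0^E$, compute the elliptic logarithms of the basis points with {\sc magma} and the elliptic logarithm of $P_0^E$ with the {\sc maple} routine of page \pageref{page my maple routine}. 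Plugging the tabulated $c_{12},c_{13},c_{14},c_{15},\alpha,\beta,\gamma,\rho$ into Theorem \ref{thm furnishing ub for M} yields, for each $N$, the initial bound $B(M)$ displayed in Table \ref{Table B(M)}, on the hypothesis $|u(P)|\geq 3|N|$.

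Then I perform the LLL-based de Weger reduction of Appendix \ref{LLL_d_-1}, iterated a few times for each $N$, driving the initial bound down to the reduced bound $14,21,26,13$ shown in Table \ref{Table B(M)}. Having such small bounds, I enumerate all tuples $(m_0,m_1,\ldots,m_r)$ within the reduced range, accelerated by the \emph{inequality trick} recalled on page \pageref{page ineq trick}: for each candidate I first check the cheap real-number inequality \eqref{eq ub |L(P)|} on $|L(P)|$, and only when this is satisfied do I assemble the point $P^E=\sum m_iP_i^E$ and push it through the rational map $\mathcal{U},\mathcal{V}$ to test whether $P^C$ has integer coordinates. The outputs are exactly the points recorded in Table \ref{Table All integer points_n}. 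Finally I append the finitely many integer points with $|u|<3|N|$ found by brute-force search on the original equation $g(u,v)=0$; these contribute nothing new in any of the four cases, so Table \ref{Table All integer points_n} is complete.

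The routine parts are the computations; the genuine obstacle, just as in Section \ref{subsec d=-1}, is the case $N=5$, where $E(\mathbb Q)$ has rank $4$ and the Mordell--Weil basis returned by the online {\sc magma} calculator gives $\rho\approx 0.4945$, which both inflates the initial bound and enlarges the reduced bound, making the final enumeration prohibitively long. This is exactly the situation flagged in the ``Important computational issue'' of Appendix \ref{LLL_d_-1}: it must be handled by searching for unimodular transformations of the {\sc magma} basis that maximise the least eigenvalue of the regulator matrix, arriving at the basis of Table \ref{Table C and E} with $\rho\approx 1.5758$. With that improved basis in hand, the reduction and final enumeration for $N=5$ become feasible and yield the two points listed, completing the proof.
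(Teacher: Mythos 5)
Your proposal is correct and follows exactly the route taken in the paper: run the machinery worked out for $d=-1$ on each of $N\in\{2,3,4,5\}$, feeding in the data assembled in Tables \ref{Table C and E}--\ref{Table B(M)}, apply Theorem \ref{thm furnishing ub for M} to get the initial bound $B(M)$, reduce by the LLL/de Weger scheme of Appendix \ref{LLL_d_-1}, and enumerate with the inequality trick down to the reduced bounds $14,21,26,13$, appending a brute-force sweep of small $|u|$. Your emphasis on the need for an improved Mordell--Weil basis in the $N=5$ case via unimodular transformations matches the paper's Remark following Table \ref{Table C and E} (though the paper itself frames these four cases as \emph{less} expensive than $d=-1$ since the ranks are at most $4$, rather than ``prohibitively long'' as you put it); other than that mild overstatement, the two arguments coincide.
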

\begin{corollary}
            \label{corol (k,l)=(6,3)}          
No $(6,3)$ near-collision with difference 1 exists.            
\end{corollary}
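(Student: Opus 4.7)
The plan is to mimic the proof of Corollary \ref{corol (k,l)=(3,6)}, exploiting the fact that a $(6,3)$ near-collision with difference $1$ becomes, after swapping the roles of $m$ and $n$, an integer solution of equation \eqref{eq d-collision} with $d=1$. Thus the entire analysis of Section \ref{othercases} for the case $N=2$, $d=1$ should apply directly.

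First I would assume that $(n_0,6,m_0,3)$ is a near-collision with difference $1$, so $\binom{m_0}{3}-\binom{n_0}{6}=1$, with $2\leq 6\leq n_0/2$ and $2\leq 3\leq m_0/2$; in particular $n_0\geq 12$ and $m_0\geq 6$. Renaming $n:=m_0$, $m:=n_0$ transforms this into $\binom{n}{3}=\binom{m}{6}+1$, which is \eqref{eq d-collision} with $d=1$, now with the size constraints $n\geq 6$ and $m\geq 12$.

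Next I would apply the reduction at the start of Section \ref{sec (k,l)=(3,6)}: setting $u=n-1$ and $v=(m-2)(m-3)/2$, the pair $(u,v)$ is an integer point on the curve \eqref{guv} with $d=1$. Since $1=(N^3-N)/6$ for $N=2$, Theorem \ref{thm various d,main} and the $N=2$ row of Table \ref{Table All integer points_n} give that the only integer solution is $(u,v)=(2,3)$. This, however, contradicts the constraints coming from the near-collision definition: $u=m_0-1\geq 5$ (equivalently, $v=(n_0-2)(n_0-3)/2\geq 45$), so no such $(n_0,m_0)$ can exist.

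The main obstacle is not in this corollary itself -- the reduction is essentially a change of names and the size check is immediate -- but rather lies in invoking Theorem \ref{thm various d,main}, whose proof for $N=2$ requires computing the rank-$2$ Mordell--Weil basis of the corresponding curve $E$, producing the initial elliptic-logarithm bound from Theorem \ref{thm furnishing ub for M}, running the LLL reduction to bring $M$ down to $14$, and enumerating the remaining $(m_1,m_2)$ tuples with the \emph{inequality trick}. Given that this machinery has already been developed in the preceding sections, the proof of the corollary itself reduces to careful bookkeeping of the two meanings of $(n,m)$.
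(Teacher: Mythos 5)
Your proposal is correct and follows essentially the same route as the paper's proof: swap the roles of $m$ and $n$ to land in equation \eqref{eq d-collision} with $d=1$, pass to the integer point $(u,v)$ via $u=n-1$, $v=(m-2)(m-3)/2$, invoke Theorem \ref{thm various d,main}, and observe that the unique solution $(u,v)=(2,3)$ violates the size constraints imposed by the near-collision definition. Your bound $u=m_0-1\geq 5$ is in fact the correct one (the paper's statement ``$n\geq 6$, so $u\geq 7$'' is a small off-by-two slip, since $u=n-1$), and it suffices equally well to exclude $u=2$.
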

\begin{proof}
Assume that $(n,6,m,3)$ is a near collision with difference 1. Then 
$\binom{m}{3}-\binom{n}{6}=1$ and, on interchanging $m,n$, we are led
to equation \eqref{eq d-collision} with $d=1$.
According to Section \ref{sec (k,l)=(3,6)}, if in \eqref{eq d-collision}
we put $u=n-1$ and $v=(m-2)(m-3)/2$, then
$(u,v)$ is an integer solution of the equation \eqref{eq reduce initial}
with $d=1$. Moreover, by the restrictions on the definition of collision, 
$n\geq 6$, so $u\geq 7$. 
According to Theorem \ref{thm various d,main}, for $d=1$ there is no 
solution $(u,v)$ with $u\geq 7$, and this concludes the proof.
\end{proof} 

%
%
\section{{\large Equation \eqref{eq general collision} with $(k,l)=(8,2)$}}
       \label{sec (k,l)=(8,2)}
We write our equation as follows: 
\[
\frac{\left(n^2-7n\right)\left(n^2-7n+6\right)
      \left(n^2-7n+10\right)\left(n^2-7n+12\right)}
                           {3\cdot4\cdot5\cdot6\cdot7\cdot8}
+2 = (m^2-m).
\]
Putting 
\begin{equation} 
                 \label{eq subst for 1st quartic}
u=\frac{1}{2}n^2-\frac{7}{2}n+6, \quad v=210m-105 
\end{equation}
we are led to 
\begin{equation}
                \label{eq 1st quartic}
v^2=35u^4-350u^3+945u^2-630u+315^2,                
\end{equation}
hence, it suffices to explicitly solve equation \eqref{eq 1st quartic}.
The most straightforward thing for doing this would be to turn 
to
{\sc magma}'s routine {\tt IntegralQuarticPoints} which is based on 
\cite{Tza quartic} and was firstly developed in 1999 by Emmanuel Herrmann 
and further improved in the years 2006-2013 by Stephen Donnelly and other 
people of {\sc magma} group. And indeed, we ran the above routine
for \eqref{eq 1st quartic}, but after five days, {\sc magma} gave up 
without results, with the message ``Killed''. 
Consequently we must solve \eqref{eq 1st quartic} 
``non-automatically'', following the method of 
\cite{Tza quartic}, as exposed in \cite[Chapter 6]{Nikosbook}.

For the successful accomplishment of this, crucial role play:
\begin{enumerate}
\item Our Mordell-Weil basis which is an improvement of the one 
furnished by {\sc magma}, as explained in the ``Important remark'' at 
the end of this section, and\\
\item  The application of an \emph{inequality trick} completely 
analogous to that which we discuss a little before and after relation 
\eqref{eq ub |L(P)|}.
\end{enumerate}

\subsection{The equation $v^2=35u^4-350u^3+945u^2-630u+315^2$}
               \label{subsec first_Equat}
We will deal with the elliptic curve
\[
C: v^2=Q(u):=35u^4-350u^3+945u^2-630u+315^2.
\]
We use the notation, results etc of  \cite[Chapter 6]{Nikosbook};
thus we have $a=35$, $b=-350$, $c=945$, $d=-630$, $e=315$.
By \cite[Relation (6.3)]{Nikosbook} the Weierstrass model which is 
birationally equivalent to the curve $C$ is
\begin{equation}
         \label{eq E for quartic}
E: y^2=f(x):=x^3+Ax+B,         
\end{equation}
where $A=-13968675$ and $B=3410363250$, and the birational functions
\[
C\ni (u,v) \mapsto (\mathcal{X}(u,v)\,,\mathcal{Y}(u,v))=(x,y)\in E 
\]
\[
E\ni (x,y) \mapsto (\mathcal{U}(x,y)\,,\mathcal{V}(x,y))=(u,v)\in C 
\]
are 
\begin{eqnarray}\label{XY1}
\nonumber\mathcal{X}(u,v) & = & \,\frac{315(u^2-2u-2v+630)}{u^2}
\\[-3pt] 
\\[-3pt]  \nonumber
\mathcal{Y}(u,v) & = & \,
-\frac{630(175u^3-945u^2-uv+945u+630v-198450)}{u^3}\,,
\end{eqnarray}
(\cite[Relation (6.4)]{Nikosbook}), and
\begin{eqnarray}
\mathcal{U}(x,y) & = & -\frac{630(x+109935+y)}{x^2-630x-13792275}\nonumber
\\[-3pt] & & \label{UV1}
\\[-3pt] 
\mathcal{V}(x,y) & = &
\nonumber-315(x^4+630x^3+2x^2y-529200x^2+439740xy+22441718250x
\\  & &\hspace{35pt}-110933550y-196956864680625)
                             :(x^2-630x-13792275)^2 \nonumber
\end{eqnarray}
(\cite[Relations (6.5), (6.6)]{Nikosbook}).
\\
The roots $e_1>e_2>e_3$ of $f(x)$ have approximate values
\[
e_1\approx3608.8322706141>e_2\approx245.1990070867
>e_3\approx-3854.0312777009.
\]
A fundamental pair of periods for the Weierstrass $\wp$ function 
associated to $E$ is
\[ 
\omega_1\approx0.043947022525096,\quad
\omega_2\approx0.042006613806929\cdot i.
\]
Now we refer to Section \ref{sec intro} the notation etc of which 
we adopt here.
\\
The rank of $E$ is $5$ and the torsion subgroup 
$E_{tors}(\mathbb{Q})$ is trivial. 
The following points form a Mordell-Weil basis for $E(\mathbb{Q})$:
\footnote{See the ``Imporatnt remark at the end of this 
section.\label{foot important quartic}}
$$P_1^E=(-1799,150724),\; P_2^E=(105,-44100),\; P_3^E=(-315,-88200),$$ 
$$P_4^E=(8985,776700),\; P_5^E=(3885,88200).$$
We note that, for $i=1,2,3$, the points $P_i^E$ belong to 
$E_1(\mathbb{R})$, the bounded piece (``egg'') of $E(\R)$, therefore  
by ``Conclusions and remarks'' (1) in \cite[page 51]{Nikosbook}, 
$\mathfrak{l}(Pi)$ is the elliptic logarithm of the point $P_i^E+Q_2^E$, 
where $Q_2^E=(e_2,0)$. Now $P_i^E+Q_2^E$ belongs to the infinite piece 
$E_0(\R)$ of $E(\R)$ but its coordinates are non-rational, belonging to the 
cubic extension of $\Q(e_2)/\Q$, therefore, for $i=1,2,3$ we compute the 
elliptic logarithm of $P_i^E+Q_2^E$ using our {\sc maple} routine
(cf.~page \pageref{page my maple routine}); thus we find
\[
\ell_1:=\mathfrak{l}(P_1)\approx-0.1233994082363,\; 
\ell_2:=\mathfrak{l}(P_2)\approx0.318524714651,\;
\ell_3:=\mathfrak{l}(P_3)\approx0.635691508151.
\]
The points $P_4^E$ and $P_5^E$ belong to $E_0(\mathbb{R})$, therefore their 
$\mathfrak{l}$-values are equal to their respective elliptic logarithms;
thus we find
\[
 \ell_4:=\mathfrak{l}(P_4)\approx-0.1074268089,\;
 \ell_5:=\mathfrak{l}(P_5)\approx-0.18720073188.
\]
Next we need to calculate approximate values of the canonical 
heights:\footnote{See footnote \ref{foot canonical height}.}
\[
\hat{h}(P_1^E)\approx2.7309763445, \; \hat{h}(P_2^E)\approx1.2722439353,  
\; \hat{h}(P_3^E)\approx1.0972517248,
\] 
\[
\hat{h}(P_4^E) \approx2.5539836387, \; 
\hat{h}(P_5^E)\approx1.2394130665
\]
and the height-pairing matrix
\[
\mathcal{H}=\left(\begin{array}{rrrrr}
2.2913414307 & 1.0192652309 & 1.5359254535 & -1.2315944080 & -0.77710896815
\\
1.0192652309 & 2.0649979264 & 0.3597655203 & -0.4612024943 & 0.38043412180\\
1.5359254535 & 0.3597655203 & 3.3258621376 & -1.9571170828 & -1.98789051540
\\
-1.2315944080 & -0.4612024943 & -1.9571170828 & 2.5707390271 & 1.39079563750
\\
-0.7771089681 & 0.3804341218 & -1.9878905154 & 0.3907956375 & 2.67523279820
\end{array}\right)
\]
with minimum eigenvalue \footref{foot important quartic}
$$\rho\approx0.5764009469.$$
We will need also to compute a positive number $\gamma$ such that 
$\hat{h}(P^E)-\frac{1}{2}h(x(P))\leq \gamma $, where $h$ denotes Weil height.
This we do by applying Proposition 2.6.3 of \cite{Nikosbook}. 
In the notation of that proposition, as a curve $D$ we take the minimal 
model of $E$ which is $E$ itself and, following the simple instructions 
therein we compute 
$\gamma=6.4974558131.$
Finally, in order to compute the necessary constants involved in
\cite[Theorem 9.1.2]{Nikosbook} which are necessary for the application of
\cite[Theorem 9.1.3]{Nikosbook}, we replace the pair of fundamental 
periods $\omega_1,\omega_2$ for which $\tau:=\omega_1/\omega_2$ does not 
belong to the fundamental region of the complex upper half-plane, by the 
pair 
$(\varpi_1,\varpi_2)=(\omega_2,-\omega_1)$; for this pair, 
$\tilde{\tau}:=\varpi_1/\varpi_2$ satisfies $|\tilde{\tau}|\geq 1$, 
$\Im\tilde{\tau}>0$ and $|\Re\tilde{\tau}|<1/2$, hence belongs to the 
fundamental region. 

In order to obtain a relation of the form \ref{eq furnishes ub for M}
we will apply Theorem 9.1.3 `` Case of Theorem 6.8'' of \cite{Nikosbook}. 
That theorem is applicable for points $P^C=(u(P),v(P))$ 
for which $|u(P)|$ is sufficiently large. 
Table 6.1 in \cite[Chapter 6]{Nikosbook} indicates a procedure for
computing how large $|u(P)|$ should be; actually, we must have
$|u(P)|\geq \max\{u^{**},\overline{u}^{**}\}$ and 
$u^{**},\overline{u}^{**}$ 
are calculated as explained in that table.
The existence of two columns in Table 6.1 of \cite[Chapter 6]{Nikosbook} 
and its specialization to our case which is 
Table \ref{Table quartic parameters} below, is explained as follows:
At this stage it is convenient, instead of searching for solutions 
of $Q(u)=v^2$ with $v\geq 0$ and $u$ of whatever sign, to look for 
solutions of both equations $Q(u)=v^2$ and $\bar{Q}(u):=Q(-u)=v^2$ with
$u,v\geq 0$. Thus, a ``bar'' over a constant refers to the second equation.
\\ \noindent
The constant $\max\{c_7,\bar{c_7}\}$($=13$ in our case) 
is used in the application of Theorem 9.1.3 `` Case of Theorem 6.8'' 
of \cite{Nikosbook}. 
%
\begin{center}
\tablecaption{Parameters and auxilliary functions for the solution of the 
quartic elliptic equation according to the table 6.1 in \cite{Nikosbook}} 
 \label{Table quartic parameters}  
\tablefirsthead{ 
\multicolumn{2}{c}{} \\[-3mm] \hline
 &  \\[-10pt]
$Q(u)=35u^4-350u^3+945u^2-630u+99225$ & 
$\overline{Q}(u)=35u^4+350u^3+945u^2+630u+99225$ \\ \hline\hline
              }
\tablehead{ \hline
\multicolumn{2}{|l|}{{\small \sl continued from previous page}} \\ \hline
 &  \\[-10pt]
$Q(u)=35u^4-350u^3+945u^2-630u+99225$ & 
$\overline{Q}(u)=35u^4+350u^3+945u^2+630u+99225$   \\ \hline\hline
          }          
\tabletail{\hline 
\multicolumn{2}{|r|}{{\small \sl continued on next page}} \\  \hline
          }
\tablelasttail{\hline\hline}
 \begin{xtabular}{|c|c|}
  & \\[-10pt]
$\sigma=1$ & $\overline{\sigma}=-1$  \\ \hline
 &   \\[-10pt]
 &  \\[-10pt]
$\mathrm{x}(u)=\dfrac{315\left(u^2-2u+630+2(Q(u))^{1/2}\right)}{u^2}$ 
& 
$\bar{\mathrm{x}}(u)=
\dfrac{315\left(u^2+2u+630+2(\overline{Q}(u))^{1/2}\right)}{u^2}$                        \\[-10pt] &  \\ \hline
\\[-25pt] &  \\
$u^{**}=3$ and $c_7=13$ & $\overline{u}^{**}=80$ and $\bar{c_7}=13$  
\\ \hline
 &  \\[-10pt]
 $P_0^E=(630\sqrt{35}+315,110250+630\sqrt{35})$ & 
$\overline{P} _0^E=
    (630\sqrt{35}+315,-110250-630\sqrt{35})$ \\ \hline
 &  \\[-10pt]
  $\mathfrak{l}(P_0)$ & $\mathfrak{l}(\overline{P}_0)=-\mathfrak{l}(P_0)$ 
  \\ \hline
 &  \\[-10pt]
 $L(P)=\mathfrak{l}(P)-\mathfrak{l}(P_0)$ & $\overline{L}(P)
 =\mathfrak{l}(P)+\mathfrak{l}(P_0)$ \\       
 \end{xtabular}
 \end{center}

\noindent
From Table \ref{Table quartic parameters} it follows that the conditions
of \cite[Theorem 6.8]{Nikosbook} which are necessary also for the 
application of \cite[Theorem 9.1.3]{Nikosbook} are fulfilled for all points 
$P^C\in C(\Z)$ with $v(P)>0$ and $|u(P)|\geq 80$. A quick computer search 
shows that the only points in $P^C(\Z)$ with $|u(P)|<80$ are those points 
$(u,v)$ listed in Table \ref{Table All integer points_d_-1} with $|u|<80$.  
\\ \noindent
From Table \ref{Table quartic parameters} it follows that, on applying
Theorem 9.1.3 of \cite{Nikosbook} we must take
$c_7=13$ and $L(P)=\mathfrak{l}(P)\pm\mathfrak{l}(P_0)$.
We have already computed approximations of the coefficients
$\omega_1$ and $\ell_i$ ($i=1,\ldots,5$) of the linear 
form $\mathfrak{l}(P)$, and using our {\sc maple} routine mentioned in 
page \pageref{page my maple routine} we also compute 
$\ell_0:=\mathfrak{l}(P_0)\approx -0.179410143$.

Using the routine {\tt IsLinearlyIndependent} of {\sc magma}, 
we see that the points $P_i^E$ ($i=0,\ldots,5$) are $\mathbb{Z}$-linearly 
independent, so that we are in the situation described in the second 
``bullet'', page 99 in \cite{Nikosbook}. Therefore, the parameters 
in the linear form (9.2) of \cite{Nikosbook} are
\[
k=r+1=6,\; d=1,\; r_0=1,\; 
(n_1,n_2,n_3,n_4,n_5)=(m_1,m_2,m_3,m_4,m_5), 
\; n_6=\pm1,\; \ell_6=\ell_0.
\]
In the notation of \cite[relation (9.3)]{Nikosbook} we have 
$N_0=\dfrac{5}{2}M+\dfrac{3}{2}$, hence $(\alpha,\beta)=(5/2,3/2)$.

In order to compute various constants involved in the upper bound for $M$
furnished by Theorem 9.1.3 of \cite{Nikosbook}, we also need to 
compute $\hat{h}(P_0^E).$ Since $P_0^E$ is not a rational point we confine 
ourselves to a reasonably good upper bound of its canonical height which 
we obtain from Proposition 2.6.4 in \cite{Nikosbook}. In the notation of 
that proposition we take as curve $D$ our curve $E$ and obtain the bound
$\hat{h}(P_0^E)\leq14.72$.

We see that the degree of the number field generated by the coordinates of 
all points $P_i$ ($i=0,\ldots,5$) is 6, so that $D=6$ in the notation of 
``Preparatory to Theorem 9.1.2'' of \cite{Nikosbook}. Following the 
instructions in that ``Preparatory'' and Theorem 9.1.2 we compute 
\[
c_{12}=6.7621175190\cdot10^{30},\quad c_{13}=3.6856632904\cdot10^{286},
\]
\[
c_{14}=2.7917594692,\quad c_{15}=28.9071122373
\]
and in the notation of \cite[Theorem 9.1.3]{Nikosbook},
\[
c_{16}=0.6761234039,\quad c_{17}=1.831780823   ,\quad c_{18}=1.
\]
By that theorem, which in our case is 
Theorem \ref{thm furnishing ub for M},
we conclude: either $M\leq c_{12}$, or $\mathcal{B}(M)>0$, where
$ \mathcal{B}(M)=
c_{18}c_{13}(\log(\alpha M+\beta)+c_{14})
(\log\log(\alpha M+\beta)+c_{15})^{k+2}+\gamma
+c_{18}\log c_{16}+ c_{17} -\rho\cdot M^2.
$
Note that all parameters of $\mathcal{B}(M)$ have already been computed and 
are displayed in this and the previous pages. 
Now it is straightforward to check that for $M\geq6.28\cdot10^{150}$ we 
have $\mathcal{B}(M)<0$, which implies that 
\[
M\leq\max\{c_{12},6.28\cdot10^{150}\}=6.28\cdot10^{150}.
\]
We cannot obtain an upper bound for $M$ essentially better than the above 
using \cite[Theorem 9.1.3]{Nikosbook}; indeed, we check that 
$\mathcal{B}(6.27\cdot10^{150})>0$ which shows that a ``little smaller'' 
bound for $M$ does not lead to a contradiction. 
\\ \noindent
We are now in a situation completely similar to that after 
relation \eqref{BM_d_-1}. There, we reduced the huge upper bound of $M$ 
by working as explained in Appendix \ref{LLL_d_-1}. 
Here, we work similarly to obtain a small upper bound for $M$. 
This time the reduction process is repeated three times to successively
give the upper bounds 170, 30 and 28 for $M$; 
the last upper bound cannot be further reduced. 
Next, we check which points $P^E=m_1P_1^E+\cdots +m_5P_5^E$ in the range 
$\max_{1\leq i\leq m}|m_i|\leq 28$ correspond to a point $P^C$ with
integral coordinates, using the \emph{inequality trick}, as explained
in the last paragraph above Table \ref{Table All integer points_d_-1}. 
The computation on a computer Intel i5-7200U @ 2.50GHz took a little more 
than 70 hours of computation and the results are comprised in 
Table \ref{Table All integer points quartic}.
\begin{theorem}
                \label{thm sols of quartic eq}
All integer solutions of the equation \eqref{eq 1st quartic} are those
listed in the seventh column of 
Table \ref{Table All integer points quartic}.                
\end{theorem}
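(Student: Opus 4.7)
The plan is to apply the Elliptic Logarithm Method exactly as outlined in Section \ref{sec intro}, exploiting the birational equivalence between $C\colon v^2=Q(u)$ and the Weierstrass model $E$ given by \eqref{XY1}--\eqref{UV1}. The Mordell-Weil group $E(\Q)$ has rank $5$ with trivial torsion, and five explicit generators $P_1^E,\ldots,P_5^E$ are on the table. Any integer point $P^C=(u(P),v(P))\in C(\Z)$ corresponds via the transformation to some $P^E=m_1P_1^E+\cdots+m_5P_5^E$; writing $M=\max_i|m_i|$, our goal is to bound $M$ so tightly that the (finitely many) remaining $(m_1,\ldots,m_5)$ can be checked one-by-one for whether $(\mathcal{U}(x(P),y(P)),\mathcal{V}(x(P),y(P)))\in\Z^2$.

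First I would dispose of the range $|u(P)|<80=\overline{u}^{**}$ (the threshold read off from Table \ref{Table quartic parameters}) by direct computer search. For $|u(P)|\geq 80$ the hypotheses of \cite[Theorems 6.8 and 9.1.3]{Nikosbook} are satisfied, and the auxiliary point $P_0^E=(630\sqrt{35}+315,\,110250+630\sqrt{35})$ together with its elliptic logarithm $\ellog(P_0)$ (computed via our ad hoc {\sc maple} routine for algebraic points) produces the relevant linear form $L(P)=\ellog(P)\pm\ellog(P_0)$. The integrality of $(u,v)$ furnishes an upper bound of the shape \eqref{eq ub |L(P)|} for $|L(P)|$, while S.~David's theorem, packaged in \cite[Theorem 9.1.3]{Nikosbook}, gives a lower bound. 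Substituting the explicitly computed parameters $\rho,\gamma,c_{12},\ldots,c_{18}$ (with $\gamma$ obtained from \cite[Proposition 2.6.3]{Nikosbook} and the upper bound $\hat{h}(P_0^E)\leq 14.72$ from \cite[Proposition 2.6.4]{Nikosbook}) into the resulting inequality yields an initial astronomical bound of order $M\leq 6.28\cdot 10^{150}$.

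The hard part is then twofold. To cut the bound down to enumerable size I would run de Weger's LLL-based reduction procedure iteratively, in the manner of Appendix \ref{LLL_d_-1}; I expect roughly three successive reductions, each roughly logarithmic in strength, producing the successive bounds $170$, $30$, $28$, at which point further iteration no longer shrinks the bound. Then one faces the enumeration over the $57^5\approx 6\cdot10^8$ candidate tuples $(m_1,\ldots,m_5)$, multiplied by the auxiliary $m_0$. A brute-force sweep that symbolically forms each $P^E$ and tests integrality of its $C$-image is infeasible, so I would use the \emph{inequality trick} described just above Table \ref{Table All integer points_d_-1}: for each $6$-tuple $(m_0,m_1,\ldots,m_5)$ first evaluate the cheap real-number condition \eqref{eq ub |L(P)|}, and only for those (few) tuples that survive this filter actually compute $P^E$ and apply the map $(\mathcal{U},\mathcal{V})$.

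The real obstacle upstream of all this is the choice of Mordell-Weil basis: the relevant eigenvalue $\rho$ appears in the denominator of the reduction estimates, so a small $\rho$ blows up both the initial and the reduced bound, rendering the final enumeration impractical. I would therefore, before anything else, apply unimodular transformations to the default {\sc magma} basis and select one that maximizes $\rho$ (obtaining $\rho\approx 0.5764$ here), as flagged in the ``Important remark'' footnote. Assembling everything — the small-$|u|$ search, the initial David bound, the LLL reduction to $M\leq 28$, and the filtered enumeration — produces exactly the list displayed in the seventh column of Table \ref{Table All integer points quartic}, and thus establishes the theorem.
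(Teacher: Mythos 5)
Your proposal follows the paper's proof essentially step for step: the same birational transformation, the same improved Mordell-Weil basis maximizing $\rho$, the same initial David-type bound of about $6.28\cdot10^{150}$, the same three-stage LLL reduction to $M\le 28$, and the same filtered enumeration via the inequality trick. The only small point you leave out is the separate check of rational points $(x,y)\in E(\Q)$ that are zeros of the denominator $q(x)=x^2-630x-13792275$ of $\mathcal{U}$ and $\mathcal{V}$ (which escape the affine birational correspondence); in the paper this is dispatched trivially since $q$ has irrational roots, so this omission does not affect correctness.
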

%
\begin{center}
\tablecaption{All points $P^E=\Sigma_i m_iP_i^E$ with 
$P^C=(u,v)\in\Z\times\Z$.} 
                            \label{Table All integer points quartic}  
\tablefirsthead{ 
\multicolumn{7}{c}{} \\[-20pt] \hline 
& & & & & &\\[-12pt]
$m_1$ & $m_2$ & $m_3$ & $m_4$ & $m_5$ & $P^E=(x,y)$ & $P^C=(u,v)$ 
  \\ \hline\hline
               }
\tablehead{ \hline
\multicolumn{7}{|l|}{{\small \sl continued from previous page}} 
\\ \hline 
& & & & & &\\[-12pt]
$m_1$ & $m_2$ & $m_3$ & $m_4$ & $m_5$ & $P^E=(x,y)$ & $P^C=(u,v)$
\\ \hline\hline
           }          
\tabletail{\hline 
\multicolumn{7}{|r|}{{\small\sl continued on next page}} \\ \hline
          }
\tablelasttail{\hline\hline }
 \begin{mpxtabular}{|r|r|r|r|r|c|c|}
 $0$ & $0$ & $0$ & $0$ & $1$ & $(3885, 88200)$ & $(111, -69615)$ 
 \\ \hline
 $1$ & $1$ & $1$ & $1$ & $-1$ & $(-4427535/1369, 6153669900/50653)$ & $(111, 69615)$ 
 \\ \hline
  $0$ & $0$ & $0$ & $1$ & $-1$ & $(5355, 286650)$ & $(-22, -3535)$ 
 \\ \hline
  $1$ & $1$ & $1$ & $0$ & $1$ & $(-465570/121, 18522000/1331)$
      & $(-22, 3535)$ 
 \\ \hline
  $0$ & $0$ & $1$ & $0$ & $-1$ & $(-3570, 88200)$ & $(-102, 64575)$ 
 \\ \hline
  $1$ & $1$ & $0$ & $1$ & $1$ & $(1228395/289, 709061850/4913)$ 
            & $(-102, -64575)$ 
 \\ \hline
  $0$ & $0$ & $1$ & $0$ & $0$ & $(-315, -88200)$ & $(1, 315)$ 
 \\ \hline
  $1$ & $1$ & $0$ & $1$ & $0$ & $(396585, -249738300)$ & $(1, -315)$ 
 \\ \hline
  $0$ & $1$ & $-1$ & $1$ & $-1$ & $(4110, 124200)$ & $(-294, -520065)$ 
 \\ \hline
  $1$ & $0$ & $2$ & $0$ & $1$ & $(-170085/49, 34428150/343)$ 
      & $(-294, 520065)$ 
 \\ \hline
  $0$ & $1$ & $0$ & $0$ & $0$ & $(105, -44100)$ & $(3, 315)$ 
 \\ \hline
  $1$ & $0$ & $1$ & $1$ & $0$ & $(44205, -9261000)$ & $(3, -315)$ 
 \\ \hline
  $0$ & $1$ & $0$ & $0$ & $1$ & $(-2765, 144550)$ & $(36, 6615)$ 
 \\ \hline
  $1$ & $0$ & $1$ & $1$ & $-1$ & $(14665/4, 307475/8)$ & $(36, -6615)$ 
 \\ \hline
  $0$ & $1$ & $0$ & $1$ & $-1$ & $(-1491, 144648)$ & $(15, 945)$ 
 \\ \hline
  $1$ & $0$ & $1$ & $0$ & $1$ & $(3801, -72324)$ & $(15, -945)$ 
 \\ \hline
  $0$ & $1$ & $0$ & $1$ & $0$ & $(-9135/4, -1223775/8)$ & $(-4, 385)$ 
 \\ \hline
  $1$ & $0$ & $1$ & $0$ & $0$ & $(28035, 4652550)$ & $(-4, -385)$ 
 \\ \hline
  $0$ & $1$ & $1$ & $0$ & $-1$ & $(4761, 211716)$ & $(-35, -8295)$ 
 \\ \hline
  $1$ & $0$ & $0$ & $1$ & $1$ & $(-3771, 49608)$ & $(-35, 8295)$ 
 \\ \hline
  $0$ & $1$ & $1$ & $0$ & $0$ & $(11235, -1124550)$ & $(6, -315)$ 
 \\ \hline
  $1$ & $0$ & $0$ & $1$ & $0$ & $(210, 22050)$ & $(6, 315)$ 
 \\ \hline
  $0$ & $0$ & $1$ & $0$ & $1$ & $(12105, 1268100)$ & $(-7, -595)$ 
 \\ \hline
  $1$ & $0$ & $0$ & $1$ & $-1$ & $(-3195, -124200)$ & $(-7, 595)$ 
 \\ \hline
  $1$ & $1$ & $1$ & $1$ & $0$ & $(-629, -109306)$ & $(0, 315)$ 
 \\  \hline
  $0$ & $0$ & $0$ & $0$ & $0$ & $\mathcal{O}$ & $(0,-315)$ 
 \\ \hline
 \end{mpxtabular}
\end{center}
{\bf Important remark}. The online {\sc magma} calculator ({\tt V2.24-3})
returns the following Mordell-Weil basis for the elliptic curve 
\eqref{eq E for quartic}:
\[
(19705/81,3758300/729),\;
(14665/4,-307475/8),\;
(8985,-776700),\]
\[
(693805,-577896200),\;
(28035,-4652550).
\]
\\ \noindent
The value of $\rho$ corresponding to that basis is $\rho\approx 0.1284705$.
As a consequence, the initial upper bound for $M$ is 
$M<1.34\cdot 10^{151}$.
This not essentially better than the above 
displayed upper bound for $M$. \emph{However} after four reduction steps
--and here $\rho$ plays its important role-- 
the final reduced upper bound is 62 which cannot be further improved. 
Therefore, had we used the above Mordell-Weil basis, 
the final check for all 6-tuples $(m_0,m_1,\ldots,m_5)$ in the range 
$-62\leq m_i\leq 62$ would be at least $(62/28)^6$ times 
more expensive, which amounts to \emph{at least one year of 
computation time!}
\noindent
We must also check the points $(x,y)\in E(\mathbb{Q})$ which are zeros of 
$q(x)=x^2-630x-13792275$ appearing in the denominator of  
$\mathcal{U}(x,y)$ and $\mathcal{V}(x,y)$. But the zeros of 
$q(x)$ are irrational, so we do not have any new solutions.
 
Finally we come back to the collision equation $\binom{m}{2}=\binom{n}{8}+1$ 
from which we started. We have  
$m=(v+105)/210$, hence $105|v$, and $2u=n^2-7n+12$. 
The only solutions $(u,v)$ with $v$ divisible by $105$ are those listed in 
Table \ref{Table 1st collision}, where also the corresponding values
of $(m,n)\in\N^2$ are listed.
%
%
\begin{center}
\tablecaption{Positive integer solutions of the collision equation 
$\binom{m}{2}=\binom{n}{8}+1$} 
 \label{Table 1st collision}  
\tablefirsthead{ 
\multicolumn{2}{c}{} \\[-15pt] \hline
 &  \\[-10pt]
$(u,v)$ & $(m,n)\in\mathbb{N}^2$ \\ \hline\hline
              }
\tablehead{ \hline
\multicolumn{2}{|l|}{{\small \sl continued from previous page}} \\ \hline
&  \\[-10pt]
$(u,v)$ & $(m,n)\in\mathbb{N}^2$   \\  \hline \hline
           }
\tabletail{\hline 
\multicolumn{2}{|r|}{{\small\sl continued on next page}}\\ \hline
     }
\tablelasttail{\hline}
 \begin{xtabular}{|c|c|} \hline
$(1,315)$ & $(2,5)$, $(2,2)$  \\ \hline
$(3,315)$ & $(2,6)$, $(2,1)$  \\ \hline
$(36,6615)$ & $(32,12)$ \\ \hline
 $(15,945)$ & $(5,9)$ \\ \hline
  $(6,315)$ & $(2,0)$, $(2,7)$ \\ \hline    
    $(0,315)$ & $(2,4)$, $(2,3)$ \\ \hline    
 \end{xtabular}
\end{center}
Note that no pair $(m,n)$ in the above table satisfies the 
condition $m\geq 4$ and $n\geq 16$, therefore we have proved the following:
\begin{corollary} \label{corol (k,l)=(2,8)} 
There is no $(8,2)$ near-collision with difference 1.
\end{corollary}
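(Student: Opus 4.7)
The plan is to deduce the Corollary directly from Theorem \ref{thm sols of quartic eq} by translating the collision equation into the quartic elliptic equation \eqref{eq 1st quartic} and then pruning the finite list of integer points returned there.

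First I would suppose, for contradiction, that $(n,8,m,2)$ is a near-collision with difference $1$; by the definition in Section \ref{sec intro} this forces the size constraints $n\geq 16$ and $m\geq 4$ and the equation $\binom{m}{2}-\binom{n}{8}=1$. Applying the substitutions of \eqref{eq subst for 1st quartic}, namely $u=\tfrac{1}{2}n^2-\tfrac{7}{2}n+6$ and $v=210m-105$, exactly as at the start of Section \ref{sec (k,l)=(8,2)}, produces an integer point $(u,v)$ on the quartic curve \eqref{eq 1st quartic}. By Theorem \ref{thm sols of quartic eq} this point must appear in the last column of Table \ref{Table All integer points quartic}.

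The next step is an arithmetic filter. From $v=210m-105$ one has $v\equiv-105\pmod{210}$; in particular $v$ must be an odd multiple of $105$. Scanning the $(u,v)$ entries of Table \ref{Table All integer points quartic} and keeping only those whose $v$-coordinate is divisible by $105$ yields precisely the list in Table \ref{Table 1st collision}. For each surviving $(u,v)$ I would then recover $(m,n)$ by solving $m=(v+105)/210$ and the quadratic $n^2-7n+(12-2u)=0$, i.e.\ $n=(7\pm\sqrt{1+8u})/2$, keeping only those pairs with $m,n\in\mathbb{N}$; this is what is already recorded in the second column of Table \ref{Table 1st collision}.

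Finally I would simply inspect that table and observe that in every row we have $m\leq 32$ but, more importantly, no pair simultaneously satisfies $m\geq 4$ and $n\geq 16$ (the largest $n$ appearing is $n=12$, in the row $(u,v)=(36,6615)$, $(m,n)=(32,12)$). This contradicts the standing assumption, completing the proof. The only non-routine part of this argument is Theorem \ref{thm sols of quartic eq} itself; once that finite list is in hand, the corollary reduces to a divisibility check followed by a glance at a short table.
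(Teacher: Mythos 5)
Your proposal is correct and follows essentially the same route as the paper: plug the definitional constraints $m\geq 4$, $n\geq 16$ into the collision equation, pass through the substitution \eqref{eq subst for 1st quartic} to reach \eqref{eq 1st quartic}, invoke Theorem \ref{thm sols of quartic eq} and Table \ref{Table All integer points quartic}, then filter by the arithmetic constraints on $u$ and $v$ to arrive at Table \ref{Table 1st collision} and note that no surviving pair meets the size bounds. One small imprecision (which the paper's own prose shares): filtering Table \ref{Table All integer points quartic} merely by $105\mid v$ does not by itself cut the list down to Table \ref{Table 1st collision} — entries such as $(111,\pm 69615)$, $(-102,\pm 64575)$, $(-294,\pm 520065)$, $(-35,\pm 8295)$ also pass that test but are discarded only at the next step because $1+8u$ fails to be a perfect square (or $n$ fails to be a nonnegative integer). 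You do carry out that second filter explicitly, so the argument goes through; just be careful not to claim that divisibility alone "yields precisely" the shorter table.
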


\begin{appendices}
\section{{\large The constant $B_0$ in Sections \ref{subsec d=(N^3-N)/6} and \ref{subsec d=-1}}}
        \label{appendix B0}
In \cite[Fact 8.2.1\,(d)]{Nikosbook} $B_0$ denotes 
the maximum modulus of the roots of the polynomial
$\Res_{\upsilon}(\g,\dfrac{\partial\g}{\partial v})\in\Z[u]$.
Since the property that we actually need is the convergence of a certain 
power series in $u$ for $|u|>B_0$, we can take as $B_0$ any number
larger than this maximum modulus.
\begin{lemma} 
                 \label{lemma B0}
The maximum modulus of the roots of the polynomial
$\Res_{\upsilon}(\g,\dfrac{\partial\g}{\partial v})\in\Z[u]$ is 
$<|n|+1$. Therefore we can take $B_0=|N|+1$.
\end{lemma}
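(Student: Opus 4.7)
The plan is to evaluate $R(u):=\Res_v(g,\partial g/\partial v)\in\Z[u]$ almost explicitly and then bound its complex roots by a short modulus estimate. Since $\partial g/\partial v=-3v^2+8v-3$ involves only $v$, its two roots are the constants $v_{1,2}=(4\pm\sqrt 7)/3$, and the multiplicative formula for resultants gives
\[
R(u)\;=\;(-3)^3\,g(u,v_1)\,g(u,v_2)\;=\;-27\,g(u,v_1)\,g(u,v_2).
\]
I would first reduce each factor modulo $3v_i^2-8v_i+3=0$, equivalently $v_i^2=(8v_i-3)/3$ and $v_i^3=(55v_i-24)/9$. A short calculation then collapses the cubic-in-$v$ part of $g$ to $(14v_i-12)/9$ and yields
\[
g(u,v_i)\;=\;15\bigl(u^3-u-N^3+N\bigr)+\frac{14v_i-12}{9}.
\]
Setting $T:=15(u^3-u-N^3+N)$ and using $v_1+v_2=8/3$, $v_1v_2=1$ to expand the product, one obtains
\[
R(u)\;=\;-27\Bigl(T^2+\tfrac{40}{27}T-\tfrac{4}{3}\Bigr),
\]
so that the roots $u_0$ of $R$ are precisely those for which $T(u_0)\in\{(-20\pm 14\sqrt 7)/27\}$; in particular
\[
\bigl|u_0^3-u_0-N^3+N\bigr|\;\le\;\frac{20+14\sqrt 7}{405}\;<\;\tfrac{1}{5}.
\]

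The remaining step is a direct modulus estimate. Assume for contradiction that $|u_0|\ge|N|+1$ and set $n:=|N|\ge 1$ (we may use $n\ge 1$ since $N\ne 0$ by hypothesis). Since $t\mapsto t^3-t$ is increasing on $t\ge 1$, the reverse triangle inequality gives
\[
\bigl|u_0^3-u_0-N^3+N\bigr|\;\ge\;|u_0|^3-|u_0|-|N^3-N|\;\ge\;(n+1)^3-(n+1)-(n^3-n)\;=\;3n(n+1)\;\ge\;6,
\]
which contradicts the bound obtained above. Hence every root of $R(u)$ has modulus strictly less than $|N|+1$, so the choice $B_0=|N|+1$ is permissible.

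The main (indeed only) nontrivial step is the algebraic collapse of $R(u)$ to the displayed quadratic in $T$; once this is in hand, the modulus estimate is immediate. A pleasant feature is that no case distinction on the sign of $N$ arises, so the bound is uniform in $N$ -- precisely what the general treatment of Section~\ref{subsec d=(N^3-N)/6} requires.
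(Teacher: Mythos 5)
Your proof is correct and takes a genuinely different --- and cleaner --- route than the paper's. Both arrive at the observation that $\Res_v(g,g_v)$ is, up to a nonzero constant, a quadratic in $u^3-u-N^3+N$, but you reach it more conceptually: since $g_v=-3v^2+8v-3$ has $u$-independent roots $v_{1,2}=(4\pm\sqrt{7})/3$, the product formula gives $\Res_v(g,g_v)=-27\,g(u,v_1)\,g(u,v_2)$, and reducing $g$ modulo $3v_i^2-8v_i+3$ makes each factor affine in $T:=15(u^3-u-N^3+N)$. The paper instead computes the sextic resultant outright, recognizes the quadratic-in-$y$ structure with $y=u^3-u$, passes to Cardano's formula for the cubics $u^3-u=y_i$, and uses the fact that the product of the Cardano radicals equals $1/3$ to obtain $|u_2|^2=u_1^2-1$ for the complex roots in terms of the unique real root $u_1$; this reduces matters to localizing $u_1$ in $(-|N|-1,\,|N|+1)$, which the paper asserts rather than proves. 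Your final step replaces all of this with a single reverse-triangle-inequality estimate: a root $u_0$ with $|u_0|\ge|N|+1$ would force $|u_0^3-u_0-N^3+N|\ge 3|N|(|N|+1)\ge 6$, contradicting the explicit bound $<1/5$ extracted from the quadratic in $T$. This treats real and complex roots uniformly, avoids Cardano's formula and any case split on the sign of $N$, makes every modulus estimate explicit, and is valid already for $|N|\ge 1$ rather than the $|N|\ge 2$ implicitly assumed in the paper's argument.
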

\begin{proof} 
(Based on an idea of E.~Katsoprinakis, whom we thank.)
We have  
\[
\Res_{\upsilon}(\mathrm{g},
\dfrac{\partial\mathrm{g}}{\partial\upsilon})
=u^6-2u^4+a_1u^3+u^2-a_1u+a_0,
\]
where $a_1=-2N^3+2N+\dfrac{8}{81}$ and 
$a_0=N^6-2N^4-\dfrac{8}{81}N^3+N^2+\dfrac{8}{81}N-\dfrac{4}{675}$.

$\Res_{\upsilon}(\mathrm{g},
\dfrac{\partial\mathrm{g}}{\partial\upsilon})
=0\Leftrightarrow(u^3-u)^2+a_1(u^3-u)+a_0=0$. 
If $u^3-u=y$ then we solve the quadratic equation $y^2+a_1y+a_0=0$ 
and we find 
$y_1=N^3-N+\dfrac{4}{81}+\dfrac{14\sqrt{7}}{405}$ and 
$y_2=N^3-N-\dfrac{4}{81}\dfrac{14\sqrt{7}}{405}$.

We have to solve the cubic equation $u^3+p u+q=0$, with $p=-1$ and 
$q=y_1$ or $y_2$.
We find that $\dfrac{q^2}{4}+\dfrac{p^3}{27}>0$ (for $|n|\geq2$), so the 
cubic equation  
$u^3+p u+q=0$ has one real root and two conjugates complex roots. So from 
Cardano's method we have that the roots are
\begin{align*}
&u_1=\A+\B\\
&u_2=-\dfrac{1}{2}(\A+\B)+i\dfrac{\sqrt{3}}{2}(\A-\B)\\
&u_2=-\dfrac{1}{2}(\A+\B)-i\dfrac{\sqrt{3}}{2}(\A-\B)
\end{align*}
where $\A=\sqrt[3]{-\dfrac{q}{2}+\sqrt{\dfrac{q^2}{4}+\dfrac{p^3}{27}}}$ 
and
$\B=\sqrt[3]{-\dfrac{q}{2}-\sqrt{\dfrac{q^2}{4}+\dfrac{p^3}{27}}}$. 

We have that $\A\cdot\B=\dfrac{1}{3}$, so 
\begin{equation}\label{cardano}
|u_2|^2=u_1^2-1
\end{equation}
So the polynomial 
$\Res_{\upsilon}(\mathrm{g},\dfrac{\partial\mathrm{g}}{\partial\upsilon})$ 
has two real roots (the real roots are in the interval $(-|N|-1,|N|+1)$)
and two pair of conjugates complex roots. From \eqref{cardano} we have 
that, if $\rho$ is root of polynomial then $|\rho|<|N|+1$.
\end{proof}
\section{{\large The constants $\th,c_9,c_{10},c_{11}$ in Section \ref{sec (k,l)=(3,6)}} } 
        \label{appendix theta,c9,c10,c11}
In order to compute the constants $\th,c_9,c_{10},c_{11}$ which are 
necessary for the resolution of equation \eqref{guv} (see the paragraph
before Theorem \ref{thm furnishing ub for M}), we follow the 
detailed instructions of \cite[Chapter 8]{Nikosbook}, especially 
sections 8.5 and 8.6 therein.
One needs first compute three positive constants $B_1,B_2$ and $B_3$
with the property that Theorem \ref{thm furnishing ub for M}) holds 
for all points $P$ with $|u(P)|\geq \max\{B_2,B_3\}$, and the computation 
of $B_2,B_3$ requires the computation of the positive constant $B_1$ with 
the following property (cf.~\cite[Proposition 8.3.2]{Nikosbook}): 
$B_1\geq B_0$ (for $B_0$ we refer to Appendix \ref{appendix B0}), 
the functions $\bx(u):=\mathcal{X}(u,v_1(u))$ and 
$\by(u):=\mathcal{Y}(u,v_1(u))$ are strictly monotonous in the interval 
$(B_1,+\infty)$ and $\by$ does not change sign in this interval.
Therefore the values of the parameters that figure in the title of this 
appendix and are involved in \eqref{eq furnishes ub for M} are computed 
under this restriction on $u(P)$.  

The following lemma is used in the computation of $B_1$; it is the correct
version of Lemma 8.5.1 in \cite{Nikosbook}.
\begin{lemma} 
                \label{lemma 8.5.1}
Let $F\in\R[X,Y]$ be a polynomial such that $F(X,0)\neq 0$ and let 
$V:\R\rightarrow\R$ be a continuous function, such that $F(u,V(u))=0$ for 
$|u|>U_0$, where $U_0$ is a positive constant.
Let $\mathcal{R}$ be the set of all \emph{real} roots of the polynomial 
$F(X,0)$, and define
$U_{min}=\min\{-U_0,\min\mathcal{R}\}$ and 
$U_{max}=\max\{U_0,\max\mathcal{R}\}$.  
Then the function $V$ keeps a constant sign in the interval 
$(U_{max}\,,\,+\infty)$
and so it does in the interval $(-\infty\,,\,U_{min})$.
\end{lemma}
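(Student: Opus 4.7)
The plan is to argue by contradiction, using nothing more than continuity of $V$ and the intermediate value theorem, applied separately on the two intervals $(U_{max},+\infty)$ and $(-\infty,U_{min})$. Consider first the interval $(U_{max},+\infty)$. If $V$ does not keep a constant sign there, then continuity forces the existence of some $u^{*}>U_{max}$ with $V(u^{*})=0$.

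The next step is to feed this $u^{*}$ back into the defining relation for $V$. Since $u^{*}>U_{max}\geq U_{0}$, the hypothesis $F(u,V(u))=0$ applies at $u=u^{*}$ and yields $0=F(u^{*},V(u^{*}))=F(u^{*},0)$, i.e.\ $u^{*}$ is a real root of the univariate polynomial $F(X,0)$. Thus $u^{*}\in\mathcal{R}$, and the definition of $U_{max}$ forces $u^{*}\leq\max\mathcal{R}\leq U_{max}$, contradicting $u^{*}>U_{max}$. The argument on $(-\infty,U_{min})$ is entirely symmetric: a sign change of $V$ would supply some $u^{*}<U_{min}\leq -U_{0}$, so that $|u^{*}|>U_{0}$ and the same substitution gives $u^{*}\in\mathcal{R}$, whence $u^{*}\geq\min\mathcal{R}\geq U_{min}$, again a contradiction.

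There is no serious obstacle; the only subtlety worth flagging is the role of the assumption $F(X,0)\not\equiv 0$. Without it, $\mathcal{R}$ could be all of $\R$ and the very definitions of $U_{max},U_{min}$ would be vacuous, which is why that hypothesis is imposed. The constants $U_{max},U_{min}$ are engineered precisely to absorb simultaneously the threshold $U_{0}$ beyond which the identity $F(u,V(u))=0$ is known to hold \emph{and} the real zeros of $F(X,0)$; once one is strictly outside the closed interval $[U_{min},U_{max}]$, neither potential source of obstruction can produce a zero of $V$, and the sign of $V$ is therefore locked.
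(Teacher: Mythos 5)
Your proof is correct and follows essentially the same route as the paper's: argue by contradiction via the intermediate value theorem, feed the resulting zero of $V$ into $F(u,V(u))=0$ to conclude it lies in $\mathcal{R}$, and then contradict the definition of $U_{\max}$ (resp.\ $U_{\min}$). The remark about the hypothesis $F(X,0)\not\equiv 0$ being needed to make $\mathcal{R}$ finite is a sensible addition, but the core argument is identical.
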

\begin{proof}
Contrary to the hypothesis, assume, for example, that $V$ changes sign
in the interval $(U_{max}\,,\,+\infty)$. Then, by the coninuity of $V$,
it follows that there exists a root, say $u_0$, of $V$ and $u_0>U_{max}$.
Since $U_{max}\geq U_0$, we have $|u_0|=u_0 > U_{max}\geq U_0$, 
therefore, by hypothesis, $F(u_0,V(u_0))=0$, hence $F(u_0,0)=0$. This 
means that $u_0\in\mathcal{R}$, therefore $u_0\leq\max\mathcal{R}$. But, 
on the other hand, $u_0>U_{max}\geq \max\mathcal{R}$ and we arrive at a 
contradiction.

Similarly we arrive at a contradiction if we assume that $V$ changes sign
in the interval $(-\infty\,,\,U_{min})$.
\end{proof}

For the computation of $B_1$ we will apply Lemma \ref{lemma 8.5.1}. 
Based on this lemma and following the detailed instructions and
notation of \cite[Section 8.5]{Nikosbook} we compute a number of 
constants, namely, $R$, $M_{1,max}$,$M_{1,min}$ and $M_{2,max}$,
$M_{2,min}$. This task requires several computational steps, which we
perform with the aid of {\sc maple}. Below we give just a rough
description of the kind of computations that we have to do; the notation
is that of \cite[Section 8.5]{Nikosbook}.
\\
First, it is easy to compute that we can take $R=|N|$.
For the computation of $M_{1,max}$,$M_{1,min}$, we need compute the 
polynomial $H_1$ in the variables $u,v,Y$ satisfying 
$H_1(u,v_1(u),\mathcal{Y}(u,v_1(u))=0$. It is
the sum of $16$ monomials and $\deg_uH_1=3$, $\deg_vH_1=2$, 
$\deg_YH_1=1$.
Since $|u|>B_0=N+1$, we have $g(u,v_1(u))=0$. We also have 
$H_1(u,v_1(u),\mathcal{Y}(u,v_1(u))=0$. This leads us to consider
the resultant $R_1$ with respect of the variable $v$ of the polynomials
$g(u,v)$ and $H_1(u,v,Y)$, so that $R_1(u,\mathcal{Y}(u,v))=0$.
As it turns out, $R_1(u,Y)$ is the product of $-(-u+N)^6$ with a 
polynomial in $u$ and $Y$. Since $-u+N\neq 0$ for $|u|>B_0=N+1$,
we must have $R_{10}(u,\mathcal{Y}(u,v_1(u)))=0$. The polynomial
$R_{10}(u,Y)$ is the sum of $94$ monomials and 
$\deg_uR_{10}=3$, $\deg_YR_{10}=3$. 
We apply Lemma \ref{lemma 8.5.1}  with $F=R_{10}$, $U_0=B_0=|N|+1$, 
$V=Y$. The polynomial $R_{10}(u,0)$ is cubic with exatly one real root in 
the interval $(-|N|-1, |N|+1)$ and by definition of $M_{1,min}$, 
$M_{1,max}$, we obtain $M_{1,min}=-|N|-1$ και $M_{1,max}=|N|+1$.
\\
For the computation of $M_{2,max}$,$M_{2,min}$, we need compute the
polynomial $H_2$ in the variables $u,v,X$ which satisfies 
$H_2(u,v_1(u),\mathcal{X}(u,v_1(u))=0$. It is the sum of
$10$ monomials terms $\deg_uH_2=2$, $\deg_vH_2=2$, $\deg_XH_2=1$. 
In analogy with what we did above, we consider
the resultant $R_2$ with respect of the variable $v$ of the polynomials
$g(u,v)$ and $H_2(u,v,X)$ which has the property that 
$R_2(u,\mathcal{X}(u,v_1(u)))=0$.
It is the product $-(N-u)^4$ with a certain polynomial $R_{20}(u,X)$ 
which is the sum of $36$ monomials of degree $2$ with respect of $u$ and 
degree $3$ with the respect to $X$. Then, necessarily, 
$R_{20}(u,\bx(u))=0$, where, for simplicity in the notation, we have
put $\bx=\mathcal{X}(u,v_1(u))$.
Differentiating this we obtain (this is equation (8.17) of 
\cite{Nikosbook})
\[
\frac{\partial R_{20}}{\partial u}(R_{20}(u,\bx(u)) +
   \frac{\partial R_{20}}{\partial \bx}(R_{20}(u,\bx(u))\cdot\bx'(u)=0
\] 
with $\bx'(u)$ meaning the derivative of $\bx(u)$ with respect to $u$.
The left-hand side is a polynomial $H_3$ in 
the variables $u,X,X'$, linear in $X'$, with the property 
$H_3(u,\bx(u),\bx'(u))=0$ identically.
This equation along with $R_{20}(u,\bx(u))=0$ suggest to 
consider the resultant, with respect of the variable $X$, of the 
polynomials $H_3(u,X,X')$ and $R_{20}(u,X)$. This we denote
by $R_3(u,X')$; it satisfies $R_3(u,\bx'(u))=0$.
According to our computations,
$R_3(u,X')$ is the product of an integer, times $(-u+N)$, times the 
square of a linear polynomial in $u$ (only) whose root belongs to the 
interval $(-|N|,|N|+1)$, times a polynomial $R_{30}(u,X')$ which is a sum of
28 monomials and $\deg_u(R_{30})=9$ and $\deg_{X'}(R_{30})=3$. Since we 
assume that $|u|>B_0=|N|+1$, then, necessarily, $R_{30}(u,\bx'(u))=0$ and 
we apply Lemma \ref{lemma 8.5.1} with $F=R_{30}$, $V=\bx'$, 
$U_0=B_0=|N|+1$.
Now $F(u,0)$ is a cubic polynomial with exactly one real root, 
approximately equal to $-|N|-1$. Therefore, in the notation of the 
aforementioned Lemma, in the present situation we have $U_{min}=-|N|-1$ 
and $U_{max}=|N|+1$. Consequently, by the definition of 
$M_{2,min}$, $M_{2,max}$, we obtain $M_{2,min}=-|N|-1$ and 
$M_{2,max}=|N|+1$.

According to Section 8.5 of \cite{Nikosbook}, this implies that 
$B_1=|N|+1$.

We perform the computation of $B_2,B_3$ and the constants 
$\th, c_9,c_{10}$ following the detailed instructions of
\cite[Section 8.6]{Nikosbook}.
First we have to compute (symbolic computation) the rational 
function $G(u,v)$ defined explicitly in 
\cite[Proposition 8.4.1]{Nikosbook}. 
Then, following Section 8.4 of \cite{Nikosbook}, we set 
$\bg(u)=G(u,v_1(u))$ (notice the difference between $\bg$ and $g$).
According to \cite[Proposition 8.4.2]{Nikosbook}, there exist constants 
$B_2\geq B_1$, $c_9>0$ and $\theta$ which satisfy 
\[
\left|\frac{\bg(u)}{\bg_v(u,v_1(u))}\right|\leq c_9|u|^{-1-\theta}
\]
($\bg_v$ means derivative with respect to $v$).
For the practical computation of these constants, a detailed example is 
discussed in \cite[Section 8.6]{Nikosbook}. 
Here we follow an analogous method. We denote by 
$\mathcal{I}=\mathcal{I}(u)$ the rational function inside the absolute 
value in the left-hand side of the above displayed inequality.
Clearing out the denominator in the last relation gives an explicit
polynomial equation $H_4(u,v_1(u),\mathcal{I})=0$. Our computations
show that 
$H_4(u,v_1(u),\mathcal{I})=-(-u+N)^6H_{40}(u,v_1(u),\mathcal{I})$,
where $H_{40}(u,v,\mathcal{I})$ is a certain polynomial in
$u,v,\mathcal{I}$. Since $|u|>|N|+1$, we must have
$H_{40}(u,v_1(u),\mathcal{I})=0$.
But we also have $g(u,v_1(s))=0$, so that we can eliminate $v_1(u)$ 
from the last two equations to obtain a relation which, according to our 
computations, is the following:
 $\mbox{constant}\cdot(N-u)^8h(u)R_4(u,\mathcal{I})=0$,
where $h(u)$ is a quartic polynomial in $u$ which, as it is easily seen, 
has no real roots for $|n|\geq2$. Then, necessarily, $R_4(u,\mathcal{I})=0$
and, following the method and notation of Section 8.6 of \cite{Nikosbook}, 
we write this equation as follows:
\begin{align}\label{pol_I}
\mathcal{I}^3+q_2(u)\mathcal{I}+q_3(u)=0,
\end{align}
with
\[
q_2(u)=\dfrac{28}{3q(u)}, \quad q_3(u)=\dfrac{8}{3q(u)},
\]
where 
\begin{align*}
q(u)=&2025u^6-4050u^4+(-4050N^3+4050N+200)u^3+2025u^2
\\&+(4050N^3-4050N-200)u-12+2025N^2+200N-200N^3-4050N^4+2025N^6.
\end{align*}
Now we work as follows.
Consider $q_2(u)$. Its numerator has no real roots and those of the 
denominator belong to the interval $(-|N|-1,|N|+1)$. But we have already 
assumed that $|u|\geq |N|+1$ so (for $|N|\geq2$) we have $q_2(u)>0$ and 
$q_3(u)>0$. 
Consequently, in \eqref{pol_I}, $\mathcal{I}<0$.
Setting $\mathcal{I}=-\mathcal{J}<0$ we obtain the equation
\begin{align}\label{pol_j}
\mathcal{J}^3+q_{2}(u)\mathcal{J}-q_3(u)=0
\end{align}
where now the strictly negative coefficient are $-q_3(u)$.
By Cauchy's rule\footnote{see ``Cauchy's rule'' in Section $8.6$ in 
\cite{Nikosbook}} ,
\[
0<\mathcal{J}<\mathrm{max}\{q_3(u)^{1/3}\}.
\]
If $|u|>3|N|$ then $q(u)>1800u^6$, implying 
$q_3(u)<\dfrac{28}{3\cdot1800}u^{-6}$. 
We obtain $0<-\mathcal{I}=\mathcal{J}<0.17|u|^{-2}$.
Hence, in the notation of \cite[Proposition 8.4.2]{Nikosbook}, 
\[
B_2=3|N|, \quad \theta=1=\dfrac{1}{\nu_s},\quad c_9=0.17. 
\]

Next we must compute constants $B_3$, $c_{10}$ and $c_{11}$ such that:
If $g(u,v)=0$ with $u$ an integer $>\max\{B_2,B_3\}$, and 
$x=\mathcal{X}(u,v)$, then 
$h(x)\leq c_{10}+c_{11}\log |u|$, where $h(x)$ denotes the absolute
logarithmic height of $x$.
For the practical computation of these constants we apply 
\cite[Proposition 8.7.1]{Nikosbook}.

We write the relation 
$g(u,v)=0$ 
in the form 
\[
v^3+a_{1}(u)v^2+a_2(u)v+a_3(u)=0
\]
where $a_1(u)=-4$, $a_2(u)=3$ and $a_3(u)=-15u^3+15u+15N^3-15N$.
Let $B_{3}$ be a constant larger than every root of every non-zero 
polynomial $a_i$. 

We easily check that we can take 
\[
B_{3}=|N|+1.
\]
Thus, in the sequel we will assume that the point $P^C=(u(P),v(P))$ 
satisfies
\begin{equation}
                \label{eq lowbd |u(P)|}
|u(P)| \geq 3|N|                
\end{equation}
and, for simplicity in the notation we put $(u(P),v(P))=(u,v)$.

Assume $u\geq3N$ and $N\geq2$.
\begin{itemize}
\item If $v\geq0$, then Cauchy's rule implies 
\[
0\leq v\leq\max\{2|a_1(u)|,\left(2|a_3(u)|\right)^{\frac{1}{3}}\}
=\max\{8,\sqrt[3]{2}\,|30u^3|^{\frac{1}{3}}\}=2\sqrt[3]{15}\,|u|.
\]
\item If $v<0$, we put $v=-w$ with $w>0$ so that $\mathrm{g}(u,v)=0$ 
is written as 
\[
w^3+b_1(u)w^2+b_2(u)w+b_3(u)=0,
\]
where $b_1(u)=4$, $b_2(u)=3$ and $b_3(u)=15u^3-15u-15N^3+15N>0,$ for 
$u\geq 3N$. So the above polynomial has no real roots.
\end{itemize}
Therefore we conclude $|v|\leq2\sqrt[3]{15}\,|u|$.

Next assume $u\leq-3N$ and $N\leq-2$. 
\\ \noindent
Then, we consider 
$\overline{g}(u,v)=g(-u,v)$ instead of $g(u,v)$.
Working as above we obtain the bound $|v|\leq2\sqrt[3]{15}\,|u|$.

Thus, in general, for $|u|\geq3|N|$ ($|N|\geq 2$) we have 
$|v|\leq2\sqrt[3]{15}\,|u|$ and, consequently, the absolute value of the 
numerator of $\mathcal{X}(u,v)$ is, easily, bounded by
\[
(180n^2+108)|u|^2+(180n^3+60n^2+60n+108)|u|+120n^3+60n+9\leq200n^3|u|^2
\]
and, clearly, $200N^3|u|^2$ is an upper bound for the absolute value of 
denominator $(N-u)^2$ of $\mathcal{X}(u,v).$ Thus
\begin{align*}
h(x(P))&=\log|x(P)|=\\
&=\log\max\{\mbox{numer}(|\mathcal{X}(u(P),v(P)|),
\mbox{denom}(|\mathcal{X}(u(P),v(P))|)\}\\
&\leq\log(200|N|^3|u(P)|^2)=\log(200|N|^3)+2\log|u(P)|
\end{align*}
and consequently, $c_{10}=\log(200|N|^3)$ and $c_{11}=2$.

Summing up, our computations furnished us with the following values:
\begin{equation}
                \label{eq the uniform constants}
B_2=3|N|,\quad \th=1,\quad c_9=0.17,\quad B_3=|N|+1, 
\quad c_{10}= \log(200|N|^3),\quad c_{11}=2.                
\end{equation}

\section{{\large The canonical height of $P_0^E$ in Section \ref{subsec d=-1}}}
        \label{hP0}
In this appendix we compute an upper bound for the canonical height of the 
point $P_0$, by applying \cite[Proposition 2.6.4]{Nikosbook}.
\begin{lemma} 
                \label{lemma hP0}
For the elliptic curve $E: y^2=x^3-1575x-12451725$ and its point $P_0^E$,
defined in \emph{Section \ref{subsec d=-1}}, we have                
\[
\hat{h}(P_0^E)\leq 7.647146073.
\]
\end{lemma}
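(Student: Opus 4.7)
The plan is to apply Proposition 2.6.4 of \cite{Nikosbook}, which provides an explicit upper bound of the form $\hat{h}(P) \leq \tfrac{1}{2}h(x(P)) + \gamma$ for every algebraic point $P$ on $E$, where $\gamma$ is a constant depending only on the curve $E$ (and not on the field of definition of $P$). The constant $\gamma \approx 4.6451703657$ has already been computed in Section \ref{subsec d=-1}, see \eqref{eq d=-1, gamma}, as part of the setup for applying the elliptic logarithm method. So the remaining task is to produce a sufficiently sharp numerical bound on the absolute logarithmic Weil height $h(x(P_0^E))$.

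Next I would compute $h(x(P_0^E))$ directly from its definition. Recall that $x(P_0^E) = 3\zeta^2 + 165\zeta + 120$ is an algebraic integer of degree $3$ over $\mathbb{Q}$, lying in the cubic field $K = \mathbb{Q}(\zeta)$ with $\zeta = \sqrt[3]{15}$. Because the element is an algebraic integer, the non-archimedean places make no contribution, and the height reduces to
\[
h(x(P_0^E)) = \frac{1}{3}\sum_{\sigma\colon K\hookrightarrow\C}\log\max\{1, |\sigma(x(P_0^E))|\},
\]
where $\sigma$ ranges over the three embeddings of $K$, sending $\zeta$ to $\zeta$, $\zeta\omega$, and $\zeta\omega^2$ respectively, with $\omega = e^{2\pi i/3}$. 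Numerical evaluation of these three conjugates (one real and a complex conjugate pair) gives moduli approximately $545.14$, $349.0$, $349.0$, whence $h(x(P_0^E))\approx 6.00394$. Plugging in, one gets
\[
\hat h(P_0^E) \leq \tfrac{1}{2}\cdot 6.00394 + 4.6451703657 \leq 7.647146073,
\]
which is the claimed bound.

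The main obstacle is simply a matter of bookkeeping and precision: one needs to compute the three archimedean absolute values of $x(P_0^E)$ to sufficient accuracy and then perform the rounding upwards so that the final sum $\tfrac12 h(x(P_0^E)) + \gamma$ provably does not exceed $7.647146073$. A minor subtlety is verifying that $x(P_0^E)$ is indeed an algebraic integer (which is immediate since $3\zeta^2+165\zeta+120 \in \mathbb{Z}[\zeta]\subseteq\mathcal{O}_K$), so that no non-archimedean places contribute; the bulk of the work is then a routine (though careful) numerical computation, and no deep ingredient beyond Proposition 2.6.4 is needed.
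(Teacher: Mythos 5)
Your computation of the Weil height is correct and in fact agrees with the paper's: you sum $\log\max\{1,|\sigma(x(P_0^E))|\}$ over the three embeddings, whereas the paper reads it off the minimal polynomial $x^3-360x^2+20925x-66442950$ via its Proposition 2.4.2 as $\tfrac13\log(66442950)\approx6.00396$; these are the same number. The real difference, and the gap, lies in the height-difference bound you invoke. You plug in $\gamma\approx4.6451703657$ from \eqref{eq d=-1, gamma}, but that constant was obtained from \emph{Proposition 2.6.3} of \cite{Nikosbook}, which the paper explicitly quotes as giving $\hat{h}(P^E)-\tfrac12 h(x(P))\leq\gamma$ \emph{for every point $P^E\in E(\Q)$} — i.e.\ for rational points. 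The point $P_0^E$ has coordinates in $\Q(\sqrt[3]{15})$, not in $\Q$, and you simply assert that $\gamma$ ``does not depend on the field of definition of $P$'' without any justification. The paper itself does not grant this: in both this appendix and in Section~\ref{subsec first_Equat} it says ``Since $P_0^E$ is not a rational point'' it turns instead to Proposition~2.6.4, a different bound depending on $\Delta$ and $j$ of $E$, and thereby obtains $\hat{h}(P_0^E)\leq7.300572483$.

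You did not see the paper's proof, so you could not have known which proposition it uses, but the distinction matters: applying a height-difference constant computed for $E(\Q)$ to a point over a cubic extension is exactly the kind of step that can fail, and the paper's own practice indicates the author did not consider it safe. Interestingly, your computation reproduces the Lemma's stated bound $7.647146073$ almost to the displayed precision (indeed $\tfrac16\log(66442950)+4.6451703657\approx7.64715$), while the proof in the paper actually delivers the sharper $7.300572483$ — a small internal inconsistency in the paper, and evidence that the number in the Lemma statement came from the same calculation you performed. Still, without establishing that Proposition~2.6.3 applies to non-rational points, your argument has an unjustified step; the paper avoids this by switching to Proposition~2.6.4.
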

\begin{proof}
According to Section \ref{subsec d=-1}, 
$x(P_0)=-15\zeta+63\zeta^2$, where $\zeta=\sqrt[3]{15}$.
The minimal polynomial of $x(P_0)$ is
$ x^3-360x^2+20925x-66442950$,
therefore, by \cite[Proposition 2.4.2]{Nikosbook} we have 
$h(x(P_0))=\frac{1}{3}\log(66442950)$.

The discriminant $\Delta$ and the $j$-invariant of $E$ are, respectively,
\[
\Delta=-66979386718470000, \quad j=-\frac{59270400}{9187844543}.
\]
Applying \cite[Proposition 2.6.4]{Nikosbook}, to the elliptic curve $E$
with $D=E$, we obtain
$\hat{h}(P_0^E)\leq 7.300572483$.
\end{proof}

\section{{\large Reduced upper bound of $M$ in Section \ref{subsec d=-1}}}
        \label{LLL_d_-1}
In this appendix we reduce the upper bound \eqref{BM_d_-1} of $M$
following the very explicit procedure described in the first four pages 
of \cite[Chapter 10]{Nikosbook}.
This is based on de Weger's reduction process \cite{dW} which makes use of 
the LLL-algorithm \cite{LLL} to problems of the following general type:
\emph{Let $\lambda=n_0+n_1\xi_1+\cdots +n_k\xi_k$, where the $\xi_i$'s are
explicitly known real numbers and $n_0,n_1,\ldots,n_k$ are unknown 
integers, such that $N=\max_{0\leq i\leq k}|n_i|\leq B$ with $B$ an 
explicit ``huge'' positive number and 
$|\lambda|\leq\kappa_1\exp(\kappa_2-\kappa_3N^2)$
with $\kappa_1,\kappa_2,\kappa_3$ explicit positive numbers.
Exploit this to find a considerably smaller upper bound for $N$, which is
of the size of $\log B$.
}

We keep the notations of \cite[Chapter 10]{Nikosbook}. 
In our case $\lambda$ is the linear form $L(P)$ up to a multiplicative 
constant and the relation $|\lambda|\leq\kappa_1\exp(\kappa_2-\kappa_3N^2)$
comes from \cite[Theorem 6.8]{Nikosbook}, which guarantees that, if
$|u(P)|\geq 80$, then 
$|L(P)|\leq 4a^{-1/2}\exp(0.5\log(3c_7)+\gamma-\rho M^2)$. 
We have

\begin{align}\label{lamda}
\lambda:=\lambda(P):=\dfrac{dr_0}{\zeta_1}
L(P)=n_0+n_1\xi_1+n_2\xi_2+n_3\xi_3+n_4\xi_4+n_5\xi_5
\end{align}
where 
\[
\xi_i=\frac{r_0\ell_i}{\zeta_1}\quad(i=1,\ldots,4)
\mbox{ and }\xi_5=\frac{r_0\ell_0}{\zeta_1},
\]
where $\ell_i=\ellog(P_i),\,i=1,\ldots,5$.
In the notation of \cite[Chapter 10]{Nikosbook} 
we have $k=6$, $d=1$, $r_0=1$, $\alpha=5/2$ and $\beta=3/2$
and $N=\frac{5}{2}M+\frac{3}{2}$. Therefore
\[ 
N\leq 2.6M, \mbox{ if $M\geq15$, hence $M^2\geq(2.6)^{-2}N^2$,}
\]
so, in the notation of \cite[Chapter 10]{Nikosbook},
$\kappa_3=(2.6)^{-2}=0.1479$ and, by \cite[Relation (10.3)]{Nikosbook} 
$\kappa_1=0.3458142306$, $\kappa_2=8.318175470$, 
 and $\kappa_4=0.06077760153$.

Choice of $C$: 
According to \eqref{BM_d_-1}, $M\leq 6.86\cdot10^{147}$, therefore 
a first upper bound for $N$ is $B_1(N):=6.86\cdot10^{147}$ and,
according to \cite[Relation (10.7)]{Nikosbook}, the integer $C$ must be 
somewhat larger than  
\[
2^{k(k+1)/2}\left(k+\frac{1}{2}\right)^{k+1}\! B_1(N)^{k+1}
=2^{21}\cdot 6.5^7\cdot\left(6.86\cdot10^{147}\right)^7
                    \lessapprox  10^{1046}.
\]
We choose $C=10^{1050}$ and work with precision $1080$ decimal digits. 
The linear form $\lambda$ to which we apply the reduction process is
\begin{align*}
\lambda=\frac{1}{\zeta_1}L(P)= &
n_0+n_1\left(\frac{\ell_1}{\zeta_1}\right)
+n_2\left(\frac{\ell_2}{\zeta_1}\right)
+n_3\left(\frac{\ell_3}{\zeta_1}\right)
+n_4\left(\frac{\ell_4}{\zeta_1}\right)
+n_5\left(\frac{\ell_5}{\zeta_1}\right) \\
& = n_0+n_1\xi_1+n_2\xi_2+n_3\xi_3+n_4\xi_4+n_5\xi_5 \\
& = n_0+n_1(-48478...)+n_2(-254638...)+n_3(-11376...)+n_4(-17120...)
+n_5(38222...).
\end{align*}
The lattice $\Gamma$ which is generated by the columns of the matrix
\[
\mathcal{M}_{\Gamma}=
\begin{pmatrix}
1 & 0 & 0 & 0 & 0 & 0 \\ 
0 & 1 & 0 & 0 & 0 & 0 \\ 
0 & 0 & 1 & 0 & 0 & 0 \\
0 & 0 & 0 & 1 & 0 & 0 \\
0 & 0 & 0 & 0 & 1 & 0 \\
\left[\C'\xi_1\right] & \left[\C'\xi_2\right] & \left[\C'\xi_3\right] 
& \left[\C'\xi_4\right] & \left[\C'\xi_5\right] & \C'
\end{pmatrix}
\]
is a sublattice of $\Z^6$, where
\[
\left[\C'\xi_1\right] =\underbrace{-4847823699\ldots3498111567}_{1080 
\mbox{ \scriptsize digits }},
\quad
\left[\C'\xi_2\right] =\underbrace{-2546386009\ldots1065528645}_{1080 
\mbox{ \scriptsize digits }},
\]
\[
\left[\C'\xi_3\right] =\underbrace{-1137615354\ldots6305151167}
_{1080\mbox{ \scriptsize digits }},
\quad
\left[\C'\xi_4\right] =\underbrace{-1712017639\ldots6800307853}
_{1080\mbox{ \scriptsize digits }},
\]
\[
\left[\C'\xi_5\right] =\underbrace{3822275389\ldots0614559892}
_{1080\mbox{ \scriptsize digits }}.
\]
All six integer coordinates of the first vector $\textbf{b}_0$ of the
$\mathrm{LLL}$-reduced basis have 180 digits
and the length of $\textbf{b}_0$ is of the size of $7.85\cdot10^{179}$,
satisfying thus the relation 
\begin{equation}
               \label{eq condition for |b0|}
 |\textbf{b}_0|>2^{k/2}(k+\frac{1}{2})B_1(N)               
\end{equation}
(cf.~\cite[Relation (10.6)]{Nikosbook}). It follows then by
\cite[Proposition 10.1.1]{Nikosbook} that
\begin{equation}
                \label{eq new ub for N}
\cu_4N^2 \leq \cu_2+\log(\cu_1C)
    -\log\{\sqrt{2^{-k}|\textbf{b}_0|^2-k B_1(N)}-k B_1(N) \}                
\end{equation}
from which we obtain $N\leq186$.

We set now $B_1(N)=186$ and repeat the process, by choosing 
$\C'=10^{30}$. 
We obtain
\[
\mbox{ new }\textbf{b}_0=\begin{pmatrix*}[r]
-53853\\15304\\ -25937\\ 245\\-36760\\12425
\end{pmatrix*}. 
\]
The new $\textbf{b}_0$ satisfies \eqref{eq condition for |b0|}
hence, from \eqref{eq new ub for N} we obtain the new upper bound 
$N\leq33$.
Repeating the process we obtain the new upper bound $N\leq 27$,
which cannot be further reduced.

{\bf Important computational issue}.
In \eqref{eq new ub for N} the parameter $\kappa_4$ is equal to
an explicitly calculable multiple of $\rho$, the least eigenvalue of 
the (positive definite) height-pairing matrix;
this is detailed in the beginning of Chapter 10 of \cite{Nikosbook}.
It is clear then that, the smaller $\rho$ is, the larger is the
upper bound for $N$ which is obtained from \eqref{eq new ub for N}.
This shows that, as the reduction process goes on and $C$ becomes 
smaller and smaller,
the role of $\rho$ becomes more and more important: The larger is $\rho$ 
the smaller will be the reduced upper bound for $N$. Therefore, it is 
important to compute a Mordell-Weil basis whose height-pairing matrix
has its least eigenvalue as small as possible. We start from a Mordell-Weil
basis furnished by {\sc magma} and then follow the algorithm of 
Stroeker \& Tzanakis \cite[Section 4]{Stro Tza}, which we implemented
in {\sc maple}. In our case, the online {\sc magma} calculator 
({\tt V2.24-3}) furnished us with the basis
\[
(235,395),\;
(750,-20205),\;
(310,-4105),\;
(495,10395),\;
(1075,35045).
\]
and corresponding $\rho\approx 0.410937$. Using the above mentioned 
algorithm we obtained the Mordell-Weil basis that we use in 
Section \ref{subsec d=-1}. As explained in the ``Important remark'' of
that section, just above Table \ref{Table All integer points_d_-1}, 
by using the improved basis a lot of computation time is gained.
 
\end{appendices}
%

\end{document}